\pdfoutput=1
\documentclass[english]{interact} 
\usepackage[utf8]{inputenc}
\usepackage{mathtools}
\usepackage{bbm}
\usepackage{comment}
\usepackage{hyperref}
\usepackage{xcolor}
\usepackage[sort, numbers, square, compress]{natbib}

\usepackage[capitalise,nameinlink]{cleveref}
\numberwithin{equation}{section}
\usepackage{enumitem}

\newtheorem{theorem}{Theorem}[section]

\newtheorem{lemma}{Lemma}[section]
\newtheorem{proposition}{Proposition}[section]

\newtheorem{definition}{Definition}[section]

\usepackage{enumitem}
\newlist{assumption}{enumerate}{1}					
\setlist[assumption]{label=(\textsc{a}\arabic*)}
\crefname{assumptioni}{Assumption}{Assumptions}

\newlist{assumption2}{enumerate}{1}
\setlist[assumption2]{label=(\textsc{a}2)}			
\crefname{assumptioni}{Assumption}{Assumptions}

\newlist{assumption3}{enumerate}{1}
\setlist[assumption3]{label=(\textsc{a}6')}			
\crefname{assumptioni}{Assumption}{Assumptions}

\newcommand{\R}{\mathbb{R}}

\newcommand{\dist}{\mathrm{dist}}
\newcommand{\norm}[1]{\left\|#1\right\|}

\newcommand{\supp}{\mathrm{supp}}

\DeclareMathOperator{\diam}{\mathrm{diam}}

\usepackage[textsize=footnotesize]{todonotes}

\begin{document}
	
\title{Generalized Differentiability and Second-Order Necessary Optimality Conditions for an Elliptic Optimal Control Problem with Exponential Nonlinearity and Discrete Measures}
\author{
	\name{Vu Huu Nhu\textsuperscript{a}\thanks{CONTACT Vu Huu Nhu. Email: nhu.vuhuu@phenikaa-uni.edu.vn}, Nguyen Hai Son\textsuperscript{b}, Phan Quang Sang\textsuperscript{a}, \and Tran Duy\textsuperscript{a}}
	\affil{\textsuperscript{a} Faculty of Fundamental Sciences, PHENIKAA University, Duong Noi, Hanoi 12116, Vietnam}
	\affil{\textsuperscript{b} Faculty of Mathematics and Informatics, Hanoi University of Science and Technology, Hanoi, Vietnam}
}

\maketitle

\begin{abstract}
	This paper deals with  generalized differentiability and second-order necessary optimality conditions for a box-constrained optimal control problem governed by an exponential semilinear elliptic equation with discrete measures as sources, where the control belongs to the space of absolutely summable  sequences.
	The presence of the exponential nonlinearity and discrete measures makes the analysis particularly challenging. 
	In particular, the control-to-state operator may fail to be directionally differentiable.
	To address this issue, we first establish  finite-dimensional directional differentiability of the control-to-state operator; that is, the operator is directionally differentiable along directions contained in finite-dimensional subspaces of the control space. 
	We then introduce a notion of generalized derivative defined as the limit of the associated finite-dimensional directional derivatives as the dimension of these subspaces tends to infinity. 
	Based on this concept, together with estimates for first- and second-order Taylor-type expansions of  
	the exponential Nemytskii  operator associated with the control-to-state mapping, we derive first- and second-order generalized differentiability of the reduced objective functional. 
	This leads to first- and second-order necessary optimality conditions for the optimal control problem.
\end{abstract}

\begin{keywords}
	Optimal control; exponential nonlinearity; discrete measure; generalized differentiability; optimality condition.
\end{keywords}

\begin{amscode}
	49K20, 49J20,  35J25, 35J61, 35J75
\end{amscode}

\section{Introduction}

In this paper, we consider the following optimal control problem
\begin{equation}
	\label{eq:P}
	\tag{P}
	\left\{
	\begin{aligned}
		& \min\limits_{\bm{u} = (u_i) \in \ell^1} J(\bm{u}) := \frac{1 }{2} \norm{y_{\bm{u}} - y_d}_{L^2(\Omega)}^2 + \frac{\nu }{2} \sum_{i=1}^\infty u_i^2 \\
		& \text{subject to } \bm{\alpha} \leq \bm{u} \leq \bm{\beta},
	\end{aligned}
	\right.
\end{equation}
where $\nu \geq 0$ and $y_{\bm{u}}$ is the weak solution to
the  Dirichlet problem
\begin{equation}
	\label{eq:P-state}
	\left\{
	\begin{aligned}
		-\Delta y + (e^y - 1) &= f_0(x) + \sum_{i =1}^{\infty} u_i \delta_{x_i} && \text{in } \Omega, \\
		y &= 0 && \text{on } \Gamma
	\end{aligned}
	\right.
\end{equation}
for $f_0 \in L^p(\Omega)$ with $p>1$ and $u_i \in \R$, $i \geq 1$, being $i$-th component of $\bm{u}$.
We assume that $\bm{\alpha}, \bm{\beta} \in \ell^1$ with $\alpha_i \leq \beta_i < 4\pi$ for all $i \geq 1$, $y_d \in L^2(\Omega)$,  $\Omega \subset \R^2$ is a bounded domain with Lipschitz boundary $\Gamma$, and $\{x_1, x_2, \ldots\}$ is a set of distinct fixed points in $\Omega$.	Here, 
$\ell^1$ denotes the space of all absolutely summable sequences. For given $\bm{v} = (v_i), \bm{w} = (w_i) \in \ell^1$, the symbol $\bm{v} \leq \bm{w}$ means that $v_i \leq w_i$ for all $i \geq 1$.

\medskip

Partial differential equations (PDEs)  with exponential nonlinearities and singular data, particularly those modeled by Dirac delta distributions, arise at the intersection of analysis, geometry, and mathematical physics. 
A prominent example is the Liouville-type equation, which appears naturally in problems involving the prescription of Gaussian curvature, conformal metrics, and mean field models; see, for instance, \cite{ChenLin2015,BandleFlucher1996,Ni1982}.


PDE-constrained optimal control problems involving measures have attracted considerable attention from the mathematical community; see, e.g., \cite{CasasClasonKunisch2012,CasasKunisch2014,ClasonKunisch2012,Hoppe2023,Otarola2024,CasasClasonKunisch2013,CasasZuazua2013}.

For problems governed by linear PDEs with convex objective functionals, we refer to \cite{ClasonKunisch2012,CasasClasonKunisch2012,CasasClasonKunisch2013,CasasZuazua2013}. In these works, with the exception of \cite{CasasZuazua2013}, the first-order optimality conditions and the numerical approximations are investigated, while second-order (both necessary and sufficient) optimality conditions are not required in this setting due to the convexity of mentioned optimization problems. 
Interestingly, although the controls belong to spaces of measures,  Casas et al. \cite{CasasClasonKunisch2012,CasasClasonKunisch2013} showed that optimal controls can be effectively approximated by linear combinations of Dirac measures, which is particularly relevant for practical applications as it allows one to control of the PDEs via finitely many point sources.
The work \cite{CasasZuazua2013}, on the other hand, establishes the first-order optimality system for the problems with both control constraint and state constraint.

Regarding problems subject to nonlinear elliptic PDEs, 
relevant contributions include those of Casas \& Kunisch \cite{CasasKunisch2014}  and Otárola \cite{Otarola2024} for semilinear elliptic equations,  
as well as Hoppe \cite{Hoppe2023} for quasilinear elliptic equations.
In these works, the control-to-state operator, which maps each control to the associated unique state of the PDE, is of class $C^2$;
consequently, the corresponding reduced cost functional is also twice continuously differentiable.
Based on this regularity, both first- and second-order optimality conditions are established.
In addition, stability properties and numerical error estimates for approximations  are, respectively, studied in \cite{CasasKunisch2014} and \cite{Otarola2024}.
A key assumption in these works is that the nonlinearity in the state equation satisfies a polynomial growth condition.

More recently, in \cite{Nhu2025}, an optimal control problem without box constraints on the control was investigated, 
in which the state equation takes the form \eqref{eq:P-state}, except that the source term is given by a \emph{finite} linear combination of Dirac measures concentrated at a fixed set of distinct points. 
In that setting, the state equation is generally ill-posed in the sense that it may admit no solution, due to the interplay between the exponential nonlinearity and the singular measure data. 
To derive first-order optimality conditions, a regularization approach combined with a limiting procedure was employed, in the spirit of \cite{Barbu1984,Tiba1990,MeyerSusu2017,Constantin2017,NeittaanmakiTiba1994}.

\medskip 

The main aim of this paper is to establish first- and second-order optimality conditions for \eqref{eq:P}. 
In marked contrast to the smooth settings in the research works listed above, with the exception of \cite{Nhu2025}, 
the associated control-to-state operator may fail to be directionally differentiable. This makes the analysis challenging. 
Unlike the approach adopted in \cite{Nhu2025}, we first show in \cref{thm:directional-diff-control2state} that the control-to-state operator $S$ is directionally differentiable  along \emph{truncated} directions belonging to finite-dimensional subspaces $\ell^1_k$ (see \eqref{eq:truncated-subspace}) of $\ell^1$
at  controls $u \in \ell^1$ such that $\exp(S(u)) \in L^r(\Omega)$  for some $r >1$--this regularity of $\exp(S)$ is guaranteed by a suitable upper-bound assumption; see \eqref{eq:beta-rho-ass} below.
By letting $k \to \infty$, we define a notion of \emph{generalized differentiability} of $S$; see \cref{def:generalized-der-S} and \cref{prop:generalized-dir-S} below. 
The chain rule, together with  some Taylor-type expansions of the exponential Nemytskii superposition operator $\exp(S)$, allows us to derive the finite-dimensional directional differentiability and generalized differentiability of the associated objective functional. 
These notions are then instrumental in formulating first- and second-order necessary optimality conditions; see \eqref{eq:1st-NOC-generalized} and \cref{thm:2ndNOCs}.

\medskip 
The paper is organized as follows. 
The notation and main assumptions for \eqref{eq:P} are introduced in \cref{sec:assumption}.
In \cref{sec:elliptic-equation-measure}, we derive exponential estimates for solutions to linear elliptic equations with discrete measures as sources. 
These estimates are applied in \cref{sec:existence-minimizers} to establish the $L^r$-regularity of $\exp(S)$ for some constant $r >1$.
\cref{sec:differentiability} is devoted to the study of finite-dimensional directional differentiability and generalized differentiability of the control-to-state mapping and the reduced cost functional. 
These notions are subsequently used to formulate the first- and second-order necessary optimality conditions, which are presented in \cref{sec:NOCs}. 
In addition, \cref{sec:optimal-control-prob}  addresses the existence of minimizers for \eqref{eq:P}, as well as provides characterizations of the tangent cone to the admissible set and a critial cone associated with \eqref{eq:P}. 
The paper concludes with appendices presenting auxiliary estimates for a real-valued exponential function  and for Poisson's equation with regularized point sources.


\section{Notation and main assumptions} \label{sec:assumption}

\paragraph*{Notation.}
The notation $B_X(x, \rho)$ stands for the open ball in a Banach space $X$, centered at $x$ with radius $\rho >0$. 
For a real number $s$, we use the symbols $s^+ := \max\{s,0 \}$ and $s^{-} := \max\{-s, 0\}$.
In $\ell^1$, with a slight abuse of notation, the symbol for a null vector is $\bm{0}$. For a given $\bm{\omega} = (\omega_i) \in \ell^1$, we  write
\[
	\bm\omega_{\max} := \max\{ \omega_i \mid i \geq 1 \}. 
\]
The usual norm in $\ell^1$ is denoted by $\norm{\cdot}_{\ell^1}$, defined by
\[
	\norm{\bm\omega}_{\ell^1} := \sum_{i=1}^\infty |\omega_i| \quad \text{for all } \bm{\omega} = (\omega_i) \in \ell^1.
\]
Now, for each integer $k \geq 1$, we denote by $\ell^1_k$ the finite-dimensional subspace of $\ell^1$, defined by
\begin{equation} \label{eq:truncated-subspace}
	\ell^1_k := \left\{ \bm{h} = (h_i) \in \ell^1 \middle | h_i = 0 \ \text{for all } i \geq k +1 \right\}.
\end{equation} 
For a given $\bm{h} = (h_i) \in \ell^1$ and an integer $k \ge 1$, we denote by
$\bm{h}_k := (h_i^k)$ the truncation of $\bm{h}$ to the first $k$ components,
defined by
\[
	h^k_i =
\left\{
\begin{aligned}
	& h_i && \text{if } 1 \leq i \leq k,\\
	& 0 && \text{if } i \geq k+1.
\end{aligned}
\right.
\]

\medskip

\paragraph*{Assumptions:}
\begin{assumption} \label{ass:standing}
	\item \label{ass:domain}
	$\Omega$ is a bounded domain in $\R^2$ with a Lipschitz boundary $\Gamma$. $\{x_1, x_2, \ldots\}$ is a set of fixed and pairwise disjoint points in the domain $\Omega$, which does not contain any of its accumulation points.
	Assume further that $y_d \in L^2(\Omega)$ and $f_0 \in L^p(\Omega)$ for some $p >1$.
	
	\item \label{ass:bounds-admissible-set}
	There holds  $\bm{\alpha}, \bm{\beta} \in \ell^1$ satisfying $\alpha_i \leq \beta_i < 4\pi$ for all $i \geq 1$.
	Moreover, the following condition
	\begin{equation}
		\label{eq:beta-rho-ass}
		\sum_{i =1}^\infty \beta_i^+ \ln \frac{1}{\rho_i} < \infty
	\end{equation}
	is satisfied,
	where $\rho_i$, $i \geq 1$, is defined by \eqref{eq:rho-i-def} below.
\end{assumption}
Throughout the paper, we impose \ref{ass:domain}. Assumption  \ref{ass:bounds-admissible-set} shall be used only in \cref{sec:NOCs} to derive the $L^r$-regularity, $r >1$, of $\exp(S)$--the exponential Nemytskii superposition operator associated with the control-to-state mapping $S$, defined in \cref{sec:control2state-mapping}.

\medskip 

From now on, we define, for each integer $i \geq 1$, the number
\begin{equation}
	\label{eq:rho-i-def}
	\rho_i := \min\left\{ \frac{1}{2} \inf \left\{ \dist(x_i,x_j) : j \geq 1, j \neq i \right\}, \dist(x_i, \Gamma) \right\}.
\end{equation}

The positivity of $\rho_i$, $i \geq 1$, is shown in \cref{prop:rho-i} below. 
\begin{proposition}
	\label{prop:rho-i}
	Under \Cref{ass:domain}, there hold $\rho_i >0$ for all $i \geq 1$ and 
	\begin{equation}
		\label{eq:square-rho-i-sum}
		\sum_{i=1}^\infty \rho_i^2 \leq R^2
	\end{equation}
	with $R := \frac{1}{2} \diam(\Omega)$. 
\end{proposition}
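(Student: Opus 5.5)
Positivity of $\rho_i$ will come from the assumption that the set $\{x_1,x_2,\ldots\}$ contains none of its accumulation points, and the summability bound from a disjoint-disks area comparison. Fix $i$. Since $x_i\in\Omega$ and $\Omega$ is open, $\dist(x_i,\Gamma)>0$. Suppose $\inf\{\dist(x_i,x_j): j\neq i\}=0$. Then there are indices $j_n\neq i$ with $|x_{j_n}-x_i|\to 0$. As the points are pairwise distinct, the $x_{j_n}$ differ from $x_i$, and we may pass to a subsequence of infinitely many distinct points. This makes $x_i$ an accumulation point of the set, contradicting \ref{ass:domain}. Hence the infimum is positive, and so is $\rho_i$ as the minimum of two positive numbers.

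For \eqref{eq:square-rho-i-sum}, consider the open disks $B_i:=B_{\R^2}(x_i,\rho_i)$. Each $B_i$ lies in $\Omega$, because $\rho_i\le \dist(x_i,\Gamma)$ and $\Omega$ is a domain: the disk is connected, contains $x_i\in\Omega$ and does not meet $\Gamma=\partial\Omega$. The disks are pairwise disjoint, since for $i\neq j$ we have $\rho_i+\rho_j\le \tfrac12|x_i-x_j|+\tfrac12|x_i-x_j|=|x_i-x_j|$. Summing areas then gives
\[
\pi\sum_{i=1}^\infty \rho_i^2=\Big|\bigcup_i B_i\Big|\le |\Omega|.
\]

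It remains to bound $|\Omega|\le \pi R^2$ with $R=\tfrac12\diam(\Omega)$. A naive ball bound would only give radius $\diam(\Omega)$, so instead I will invoke the isodiametric inequality: a set of diameter $d$ in $\R^2$ has Lebesgue measure at most $\pi(d/2)^2$. This yields $\sum_i\rho_i^2\le R^2$.

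The only genuinely nontrivial ingredient is this sharp constant via the isodiametric inequality. If one prefers to avoid it, a more elementary route uses the half-distance structure directly, but I expect the isodiametric inequality to be the cleanest path. Everything else is routine.
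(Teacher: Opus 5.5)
Your proof is correct and follows the same route as the paper. Positivity of $\rho_i$ comes from the no-accumulation-point assumption, which the paper calls standard and you argue correctly. The disks $B_{\R^2}(x_i,\rho_i)$ are pairwise disjoint and lie in $\Omega$, so $\pi\sum_i\rho_i^2\le|\Omega|$.

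The paper leaves the final step $|\Omega|\le\pi R^2$ unstated, and your appeal to the isodiametric inequality is the right way to supply it. A planar set of diameter $2R$ need not fit in a disk of radius $R=\frac12\diam(\Omega)$ (an equilateral triangle is a counterexample), so a containment argument alone would not give this constant.
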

\begin{proof}
	The argument showing the positivity of all $\rho_i$ is standard. Moreover, by definition of $\rho_i$, the open balls $B_{\R^2}(x_i, \rho_i)$ are  pairwise disjoint and contained in $\Omega$. This proves \eqref{eq:square-rho-i-sum}.
\end{proof}

\section{Exponential estimates for elliptic equations with atomic measures}
\label{sec:elliptic-equation-measure}

We begin this section by deriving exponential estimates for solutions to Poisson's equation with an atomic measure as the source. We first introduce the following notation.
For a given $\bm{\omega} \in \ell^1$, we define
\begin{equation}
	\label{eq:L-func}
	L(\bm{\omega}, \bm{\rho}) := \sum_{i =1}^\infty \omega_i^{+} \ln \frac{1}{\rho_i},
\end{equation}
where $\rho_i$, $i \geq 1$, is defined in \eqref{eq:rho-i-def}.

The following proposition generalizes the one in \cite[Prop.~3.1]{Nhu2025} (see also  \cite[Thm.~1]{BrezisMerle1991}).
\begin{proposition}
	\label{prop:Poisson-exponential-esti-inf-many-Dirac}
	Assume that $\bm{\omega} \in \ell^1$ such that $\omega_i >0$ for all $i \geq 1$ and that 
	\begin{equation}
		\label{eq:omega-rho-ass}
		L(\bm{\omega}, \bm{\rho})  < \infty,
	\end{equation}
	where $L(\bm{\omega}, \bm{\rho})$ is defined as in \eqref{eq:L-func}.
	Let $y \in W^{1,q}_0(\Omega)$, $1 \leq q <2$, be a unique solution to 
	\begin{equation}
		\label{eq:Poisson-many-Dirac}
		-\Delta y  = \sum_{i=1}^\infty \omega_i \delta_{x_i} \, \text{in } \Omega, \quad
		y = 0 \, \text{on } \Gamma.
		%
	\end{equation}
	Then, for any $\alpha \in (0, 4 \pi)$, there holds
	\begin{multline}
		\label{eq:exponential-esti-Dirac-inf}
		\int_\Omega \exp \left[\frac{(4\pi - \alpha)|y(x)|}{\bm\omega_{\max}} \right] dx \\
		 \leq \frac{4\pi^2 R^2}{\alpha} (2R)^{\left(2 - \frac{\alpha}{2\pi}\right) \frac{\norm{\bm{\omega}}_{\ell^1}}{\bm{\omega}_{\max}} }
		\exp\left[\frac{1}{\bm\omega_{\max}}\left(2 - \frac{\alpha}{2 \pi}\right)   L(\bm{\omega}, \bm{\rho})\right]
	\end{multline}
	 with $R := \frac{1}{2} \diam \Omega$.
\end{proposition}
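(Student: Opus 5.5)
The proof follows the Brezis--Merle strategy: a Green's function representation, Jensen's inequality with respect to a probability measure, and a direct integration of the logarithmic singularity. Extend $\Omega$ into the ball $B := B_{\R^2}(x_0, 2R)$ around any $x_0 \in \Omega$; it contains $\Omega$ since $\diam\Omega = 2R$. Set $\tilde y(x) := \frac{1}{2\pi}\sum_i \omega_i \ln\frac{2R}{|x-x_i|}$. For $x \in \Omega$ we have $|x - x_i| \le 2R$, so $\tilde y \ge 0$ on $\Omega$, and $-\Delta \tilde y = \sum_i \omega_i \delta_{x_i}$ in $\R^2$. The series converges in $W^{1,q}_{loc}$ because $\bm\omega \in \ell^1$. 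By the maximum principle (weak comparison for $W^{1,q}_0$ solutions of measure data, obtained by approximation or duality), $0 \le y \le \tilde y$ in $\Omega$, so $|y| = y \le \tilde y$. It therefore suffices to estimate $\int_\Omega \exp[(4\pi-\alpha)\tilde y/\bm\omega_{\max}]$.

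Write $\lambda := (4\pi-\alpha)/\bm\omega_{\max}$ and $\theta_i := \omega_i/\norm{\bm\omega}_{\ell^1}$, so that $\sum_i \theta_i = 1$. Then
\[
\lambda \tilde y(x) = \sum_i \theta_i \Big[\frac{(4\pi-\alpha)\norm{\bm\omega}_{\ell^1}}{2\pi\bm\omega_{\max}} \ln\frac{2R}{|x-x_i|}\Big].
\]
Jensen's inequality for $\exp$ gives $e^{\lambda\tilde y} \le \sum_i \theta_i \,(2R/|x-x_i|)^{\gamma}$ with $\gamma = (2-\frac{\alpha}{2\pi})\norm{\bm\omega}_{\ell^1}/\bm\omega_{\max}$. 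This is where the main obstacle appears: $\gamma$ may exceed $2$, making $|x-x_i|^{-\gamma}$ non-integrable. That is exactly why the factor $\exp[L(\bm\omega,\bm\rho)\cdot]$ and the $\rho_i$ enter.

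To handle this, I would not apply Jensen globally. Instead, split $\Omega$ into the pairwise disjoint balls $B_i := B_{\R^2}(x_i,\rho_i)$ (see \cref{prop:rho-i}) and their complement. On $B_i$, for $j \ne i$, we have $|x-x_j| \ge |x_j - x_i| - \rho_i \ge \rho_j$, using $2\rho_i, 2\rho_j \le |x_i-x_j|$. Hence $\ln\frac{2R}{|x-x_j|} \le \ln 2R + \ln\frac1{\rho_j}$. So on $B_i$,
\[
\lambda\tilde y \le \frac{4\pi-\alpha}{2\pi}\frac{\omega_i}{\bm\omega_{\max}}\ln\frac{1}{|x-x_i|} + \gamma\ln(2R) + \frac{1}{\bm\omega_{\max}}\Big(2-\frac{\alpha}{2\pi}\Big)L(\bm\omega,\bm\rho),
\]
where I have bounded $\omega_i \ln 2R$ by the total and dropped the $\ln(1/\rho_i)$ term for $i$ when not needed. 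The coefficient of the singular term is $\le 2-\frac{\alpha}{2\pi} < 2$. Integrating $|x-x_i|^{-(2-\alpha/2\pi)}$ over a disc of radius at most $R$ gives $2\pi \cdot \frac{2\pi}{\alpha} R^{\alpha/2\pi} \le \frac{4\pi^2}{\alpha}\,\cdot$(a power of $R$). Summing over $i$ is still problematic because it produces a factor equal to the number of balls.

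The remedy I would try first is to choose, pointwise, the nearest atom $i(x)$ for all $x \in \Omega$. This partitions $\Omega$ into Voronoi-type cells $V_i \ni x_i$. On $V_i$ the same bound $|x-x_j| \ge \tfrac12|x_i-x_j| \ge \rho_j$ holds for $j \neq i$, so the displayed estimate holds on all of $V_i$. Then
\[
\int_\Omega e^{\lambda\tilde y} \le (2R)^{\gamma} e^{(2-\alpha/2\pi)L/\bm\omega_{\max}} \sum_i \int_{V_i} |x-x_i|^{-(2-\alpha/2\pi)\omega_i/\bm\omega_{\max}} dx.
\]
Finally, bound $\sum_i \int_{V_i} \le \int_\Omega \sup_i(\cdot)$, using $|x-x_i| \le 2R$ on $V_i$ to normalize by $2R$ where the exponent is smaller. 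Estimating the resulting integral by the single-point integral over a disc of area $\le \pi R^2$ yields the constant $\frac{4\pi^2R^2}{\alpha}$ up to the matching of powers of $R$. I expect this bookkeeping of constants, so as to reproduce exactly $\frac{4\pi^2R^2}{\alpha}$ and $(2R)^{\gamma}$, to be the delicate part.
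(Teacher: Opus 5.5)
\paragraph{Verdict.} Your proof has a genuine gap in its final step, although the first step is sound and is a genuinely different, simpler route than the paper's.

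\paragraph{The first step is fine.} Compare $y$ with the explicit potential $\tilde y=\frac{1}{2\pi}\sum_i\omega_i\ln\frac{2R}{|x-x_i|}\ge 0$. To justify it, do it for the truncated sums, where $\tilde y_k-y_k$ is harmonic with nonnegative boundary values, and then let $k\to\infty$. This replaces the paper's truncation, mollification, Green's function of $B_R$ and Jensen over the mollifier (\cref{lem:Poisson-regularized-point-source}). It only uses $|x-x_i|\le\diam\Omega$.

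\paragraph{Where it fails.} In your local bound you discard the term $\omega_i\ln\frac{1}{\rho_i}$, bounding $\sum_{j\ne i}\omega_j\ln\frac1{\rho_j}$ by $L(\bm\omega,\bm\rho)$. This is not even an upper bound when $\rho_i>1$. More importantly, you throw away the factor $\rho_i^{a_i}$, where $a_i:=(2-\frac{\alpha}{2\pi})\frac{\omega_i}{\bm\omega_{\max}}$, and this factor is exactly what makes the contributions of the atoms summable.

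What remains is $\sum_i\int_{V_i}|x-x_i|^{-a_i}\,dx\ge\sum_i\frac{2\pi}{2-a_i}\rho_i^{2-a_i}$, since $B_i\subset V_i$. This is not controlled by a single-point disc integral:
\begin{itemize}
\item For $N$ atoms of weight $\bm\omega_{\max}$ on a grid of mesh $\delta$, it is of order $\delta^{-a}$, with $a:=2-\frac{\alpha}{2\pi}$.
\item Passing to $\sup_i$ with the common exponent $a$ makes things worse. Then $\int_\Omega\sup_i(2R/|x-x_i|)^{a}\,dx\ge(2R)^a\frac{2\pi}{2-a}\sum_i\rho_i^{2-a}$, which can be $+\infty$ although $L(\bm\omega,\bm\rho)<\infty$. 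An example is atoms on a segment with $\rho_i\sim i^{-3/2}$, $\omega_i=i^{-2}$ and $\alpha\le\frac{4\pi}{3}$.
\end{itemize}
So this is not a matter of matching constants: the Voronoi/sup route as proposed does not yield \eqref{eq:exponential-esti-Dirac-inf}.

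\paragraph{The missing idea.} Normalize the singular term by $\rho_i$ instead of $2R$:
\[
\ln\frac{2R}{|x-x_i|}=\ln\frac{2R}{\rho_i}+\ln\frac{\rho_i}{|x-x_i|}.
\]
Set $\lambda:=\frac{4\pi-\alpha}{\bm\omega_{\max}}$, $\mathcal{B}:=\bigcup_iB_i$ and
\[
K:=\sum_ja_j\ln\frac{2R}{\rho_j}=\Big(2-\frac{\alpha}{2\pi}\Big)\frac{1}{\bm\omega_{\max}}\Big[\norm{\bm\omega}_{\ell^1}\ln(2R)+L(\bm\omega,\bm\rho)\Big].
\]
Since $|x-x_j|\ge\rho_j$ whenever $x\notin B_j$, you get:
\begin{itemize}
\item $\lambda\tilde y\le K$ on $\Omega\setminus\mathcal{B}$;
\item $\lambda\tilde y\le K+a_i\ln\frac{\rho_i}{|x-x_i|}$ on $B_i$.
\end{itemize}
Hence
\[
\int_\Omega e^{\lambda\tilde y}\,dx\le e^{K}\Big[|\Omega\setminus\mathcal{B}|+\sum_i\frac{2\pi\rho_i^2}{2-a_i}\Big]\le e^{K}\Big[|\Omega|+\Big(\frac{4\pi^2}{\alpha}-\pi\Big)\sum_i\rho_i^2\Big]\le\frac{4\pi}{\alpha}\,|\Omega|\,e^{K},
\]
using $2-a_i\ge\frac{\alpha}{2\pi}$ and $\pi\sum_i\rho_i^2\le|\Omega|$.

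Each ball contributes $O(\rho_i^2)$, so the ``number of balls'' never appears and no Voronoi cells are needed. This is exactly the paper's splitting into $A_1^{k,\epsilon}$ and $A_2^{k,\epsilon}$, combined with \cref{prop:rho-i}.

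Finally, $|\Omega|\le\pi R^2$ by the isodiametric inequality, which gives the constant $\frac{4\pi^2R^2}{\alpha}$. With your comparison function this is all you need. The paper's reduction to $\Omega\subset B_{\R^2}(0,R)$ with $R=\frac12\diam\Omega$ is not automatic; an equilateral triangle is a counterexample.
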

\begin{proof}
Without loss of generality, assume that $\Omega \subset B_{\R^2}(0,R) =: B_R$.
Let $k \geq 1$ be an arbitrary integer and let $y_k$ be the unique solution in $W^{1,q}_0(\Omega)$, $1 \leq q < 2$, to
\begin{equation}
	\label{eq:Poisson-many-Dirac-k}
	-\Delta y_k  = \sum_{i=1}^k \omega_i \delta_{x_i} \, \text{in } \Omega, \quad
	y_k = 0 \, \text{on } \Gamma.
	%
\end{equation}
By \cite[Thm.~4.B.1]{BresisMarcusPonce2007}, there holds $y_k \to y$ in $W^{1,q}_0(\Omega)$ for all $1 \leq q < 2 = \frac{N}{N-1}$. There then exists a subsequence, denoted in the same way, of $\{k\}$ such that 
\begin{equation}
	\label{eq:yk-y-limit}
	y_k \to y \quad \text{a.e. in } \Omega \quad \text{as } k \to \infty.
\end{equation}
We will show that
\begin{multline}
	\label{eq:exponential-esti-Dirac-k}
	\int_\Omega \exp \left[\frac{(4\pi - \alpha)|y_k(x)|}{\bm\omega_{\max}} \right] dx \\
	 \leq 
	\pi (2R)^{\left(2 - \frac{\alpha}{2\pi}\right) \frac{\sum_{i=1}^k \omega_i}{\bm{\omega}_{\max}} }
	\left[R^2 +\left(\frac{4\pi}{\alpha} -1\right) \sum_{i=1}^{k} \rho_i^2\right]
	\exp\left[\frac{1}{\bm\omega_{\max}}\left(2 - \frac{\alpha}{2 \pi}\right)   \sum_{i=1}^k \omega_i \ln \frac{1}{\rho_i} \right].
\end{multline}
From this, by letting $k \to \infty$ and using the limit \eqref{eq:yk-y-limit}, we derive \eqref{eq:exponential-esti-Dirac-inf} 
from Fatou's lemma, the estimate \eqref{eq:square-rho-i-sum}, and the condition \eqref{eq:omega-rho-ass}.

Let $\epsilon_0 \in (0, \frac{1}{2}\min\{\rho_i \mid 1 \leq i \leq k \})$
be arbitrary but fixed, and let $\{\phi_\epsilon\}_{0 < \epsilon < \epsilon_0}$ be a family of mollifiers. 
Assume that $y_i^{\epsilon}$ and $\tilde{y}_i^{\epsilon}$, $1 \leq i \leq k$, uniquely solve 
\begin{align}
	& 	\label{eq:Poisson-delta-i-G-BR}
	\left\{
	\begin{aligned}
		-\Delta y_i^{\epsilon} & = \phi_\epsilon(\cdot -x_i) && \text{in } \Omega, \\
		y_i^{\epsilon} &= 0 && \text{on } \Gamma
	\end{aligned} 
	\right.
	\quad \text{and}
	\quad
	\left\{
	\begin{aligned}
		-\Delta \tilde y_i^{\epsilon} & = \phi_\epsilon(\cdot -x_i) && \text{in } B_R, \\
		\tilde y_i^{\epsilon} &= 0 && \text{on } \partial B_R,
	\end{aligned}
	\right.
\end{align}
respectively. 
By using the maximum principle; see, e.g. \cite[Prop.~4.B.1]{BresisMarcusPonce2007}, we deduce that $y_i^{\epsilon} \geq 0$ a.e. in $\Omega$,  $\tilde y_i^{\epsilon} \geq 0$ a.e. in $B_R$ for all $1 \leq i \leq k$, and
\begin{equation}
	\label{eq:yi-epsilon-comparison}
	0 \leq y_i^{\epsilon} \leq \tilde{y}_i^{\epsilon} \quad \text{a.e. in } \Omega.
\end{equation} 
Setting now ${y}^{k, \epsilon} := \sum_{i=1}^k \omega_i y_i^{\epsilon}$ and  $\tilde{y}^{k, \epsilon} := \sum_{i=1}^k \omega_i \tilde y_i^{\epsilon}$ 
and employing the fact that 
$\phi_\epsilon(\cdot - x_i)$ converges weakly-star to $\delta_{x_i}$ in $\mathcal{M}(\Omega)$ as $\epsilon \to 0$, we conclude that
\begin{equation}
	\label{eq:yi-epsilon-limit}
	{y}^{k, \epsilon} \to y_k \quad \text{in } W^{1,q}_0(\Omega) \quad \text{and a.e. in } \Omega;
\end{equation}
see, e.g. \cite[Thm.~2.1]{CasasKunisch2014}.
Besides, \eqref{eq:yi-epsilon-comparison} gives
\begin{equation}
	\label{eq:yi-epsilon-order}
	\tilde{y}^{k, \epsilon} \geq {y}^{k, \epsilon} \geq 0 \quad \text{a.e. in } \Omega, \quad \text{for all } \epsilon \in (0, \epsilon_0).
\end{equation}

Fixing $\alpha \in (0, 4 \pi)$ and $i \in \{1,2, \ldots, k\}$, we now exploit \cref{lem:Poisson-regularized-point-source} for the case where $m := \frac{(4 \pi - \alpha)\omega_i}{\bm\omega_{\max}}$, $\rho_0 := \rho_i$, $x_0 := x_i$,  and $y_0 := \tilde{y}_i^{ \epsilon}$ to derive 
\begin{equation}
	\label{eq:yi-epsilon-esti-pointwise-outside}
	\exp\left[\frac{(4 \pi - \alpha)\omega_i}{\bm\omega_{\max}}|\tilde{y}_i^{\epsilon}(x)|\right] \leq  \left( \frac{2R}{\rho_i -\epsilon}\right )^{(2 - \frac{\alpha}{2 \pi}) \frac{\omega_i}{\bm\omega_{\max}} }
\end{equation}
for a.e. $x \in B_R \backslash \overline{  B_{\R^2}(x_i, \rho_i)}$ and 
\begin{equation}
	\label{eq:yi-epsilon-esti-inball}
	\int_{ B_{\R^2}(x_i, \rho_i)} \exp\left[\frac{(4 \pi - \alpha)\omega_i}{\bm\omega_{\max}}|\tilde{y}_i^{ \epsilon}(x)|\right]  dx 
	\leq \frac{2 \pi (\epsilon + \rho_i)^2}{2 - {(2 - \frac{\alpha}{2 \pi}) \frac{\omega_i}{\bm\omega_{\max}} }}  \left( \frac{2R}{\epsilon + \rho_i} \right)^{(2 - \frac{\alpha}{2 \pi}) \frac{\omega_i}{\bm\omega_{\max}} }.
\end{equation}
By definition of $y^{k, \epsilon}$, we conclude that
\begin{multline}
	\label{eq:yk-esti1}
	\int_{\Omega} \exp\left[\frac{(4 \pi - \alpha)}{\bm\omega_{\max}}{y}^{k, \epsilon}(x)\right]  dx  = \int_{\Omega} \prod_{i=1}^{k} \exp\left[\frac{(4 \pi - \alpha)\omega_i}{\bm\omega_{\max}} {y}_i^{\epsilon}(x)\right]  dx \\
	\leq \int_{B_R} \prod_{i=1}^{k} \exp\left[\frac{(4 \pi - \alpha)\omega_i}{\bm\omega_{\max}} \tilde {y}_i^{ \epsilon}(x)\right]  dx,
\end{multline}
where the last inequality is derived by combining  \eqref{eq:yi-epsilon-comparison} with the fact that $\Omega \subset B_R$.
Setting $B_0 := \cup_{i=1}^k B_{\R^2}(x_i, \rho_i)$, the right-hand side of the above estimate can be split into two terms as follows
\begin{multline}
	\label{eq:yk-esti2}
	\int_{B_R\backslash B_0} \prod_{i=1}^{k} \exp\left[\frac{(4 \pi - \alpha)\omega_i}{\bm\omega_{\max}} \tilde {y}_i^{ \epsilon}(x)\right]  dx \\
	+ \sum_{l=1}^{k} \int_{B_{\R^2}(x_l, \rho_l)} \prod_{i=1}^{k} \exp\left[\frac{(4 \pi - \alpha)\omega_i}{\bm\omega_{\max}} \tilde {y}_i^{ \epsilon}(x)\right]  dx =: A_1^{k, \epsilon} + A_2^{k, \epsilon}.
\end{multline}
For $A_1^{k, \epsilon}$, by using \eqref{eq:yi-epsilon-esti-pointwise-outside}, we have
\begin{align*}
	A_1^{k, \epsilon} & \leq \int_{B_R\backslash B_0} dx \prod_{i=1}^{k}   \left( \frac{2R}{\rho_i -\epsilon} \right)^{(2 - \frac{\alpha}{2 \pi}) \frac{\omega_i}{\bm\omega_{\max}} } 
	= \pi \prod_{i=1}^{k}  \left( \frac{2R}{\rho_i -\epsilon} \right)^{(2 - \frac{\alpha}{2 \pi})  \frac{\omega_i}{\bm\omega_{\max}} } [R^2 - \sum_{i=1}^{k} \rho_i^2]\\
	& = \pi [R^2 - \sum_{i=1}^{k} \rho_i^2]\left( 2R \right)^{(2 - \frac{\alpha}{2\pi}) \frac{\sum_{i=1}^k \omega_i}{\bm{\omega}_{\max}} }\prod_{i=1}^{k}  \left( \frac{1}{\rho_i -\epsilon} \right)^{(2 - \frac{\alpha}{2 \pi})  \frac{\omega_i}{\bm\omega_{\max}} }.
\end{align*}
Consequently, one has
\begin{equation}
	\label{eq:A1k-lim}
	\limsup\limits_{\epsilon \to 0^+} A_1^{k, \epsilon} \leq
	\pi [R^2 - \sum_{i=1}^{k} \rho_i^2](2R)^{(2 - \frac{\alpha}{2\pi}) \frac{\sum_{i=1}^k \omega_i}{\bm{\omega}_{\max}} }\prod_{i=1}^{k}  \left( \frac{1}{\rho_i} \right)^{(2 - \frac{\alpha}{2 \pi})  \frac{\omega_i}{\bm\omega_{\max}} }.
\end{equation}
For $A_2^{k, \epsilon}$, in light of \eqref{eq:yi-epsilon-esti-pointwise-outside}  and \eqref{eq:yi-epsilon-esti-inball}, we obtain
\begin{align*}
	A_2^{k, \epsilon} & = \sum_{l=1}^{k} \int_{B_{\R^2}(x_l, \rho_l)} \exp\left[\frac{(4 \pi - \alpha)\omega_l}{\bm\omega_{\max}} \tilde {y}_l^{ \epsilon}(x)\right]  \times   \prod_{i \neq l}  \exp\left[\frac{(4 \pi - \alpha)\omega_i}{\bm\omega_{\max}} \tilde {y}_i^{ \epsilon}(x)\right]  dx \\
	& \leq \sum_{l=1}^{k} \prod_{i \neq l}   \left( \frac{2R}{\rho_i -\epsilon} \right)^{(2 - \frac{\alpha}{2 \pi})  \frac{\omega_i}{\bm\omega_{\max}} } \int_{B_{\R^2}(x_l, \rho_l)} \exp\left[\frac{(4 \pi - \alpha)\omega_l}{\bm\omega_{\max}} \tilde {y}_l^{\epsilon}(x)\right]dx \\
	& \leq 
	\sum_{l=1}^{k} \prod_{i \neq l}   \left( \frac{2R}{\rho_i -\epsilon} \right)^{(2 - \frac{\alpha}{2 \pi})  \frac{\omega_i}{\bm\omega_{\max}} }
	\times \frac{2 \pi (\epsilon + \rho_l)^2}{2 - {(2 - \frac{\alpha}{2 \pi}) \frac{\omega_l}{\bm\omega_{\max}} }} \times \left( \frac{2R}{\epsilon + \rho_l} \right)^{(2 - \frac{\alpha}{2 \pi}) \frac{\omega_l}{\bm\omega_{\max}} } \\
	& = 	
	2 \pi (2R)^{(2 - \frac{\alpha}{2 \pi})   \frac{\sum_{i=1}^k \omega_i}{\bm\omega_{\max}} }
	\sum_{l=1}^k (\epsilon + \rho_l)^2\left(\frac{\rho_l - \epsilon}{\rho_l + \epsilon}\right)^{(2 - \frac{\alpha}{2 \pi}) \frac{\omega_l}{\bm\omega_{\max}} } \left[{2 - {\left(2 - \frac{\alpha}{2 \pi}\right) \frac{\omega_l}{\bm\omega_{\max}} }}\right]^{-1} \\
	& \qquad \times \prod_{i =1}^k   \left( \frac{1}{\rho_i -\epsilon}\right)^{(2 - \frac{\alpha}{2 \pi})  \frac{\omega_i}{\bm\omega_{\max}} }.
\end{align*}  
Then, there holds
\begin{multline}
	\label{eq:A2k-lim}
	\limsup\limits_{\epsilon \to 0^+} A_2^{k, \epsilon}  \\
	\begin{aligned}[b]
		&\leq 2 \pi (2R)^{(2 - \frac{\alpha}{2 \pi})   \frac{\sum_{i=1}^k \omega_i}{\bm\omega_{\max}} }
		\sum_{l=1}^k \rho_l^2 \left[{2 - {\left(2 - \frac{\alpha}{2 \pi}\right) \frac{\omega_l}{\bm\omega_{\max}} }}\right]^{-1} \times \prod_{i =1}^k   \left( \frac{1}{\rho_i} \right)^{(2 - \frac{\alpha}{2 \pi})  \frac{\omega_i}{\bm\omega_{\max}} }  \\
		& \leq   \frac{4\pi^2}{\alpha} (2R)^{(2 - \frac{\alpha}{2 \pi})   \frac{\sum_{i=1}^k \omega_i}{\bm\omega_{\max}} }
		\sum_{l=1}^k \rho_l^2 \prod_{i =1}^k   \left( \frac{1}{\rho_i} \right)^{(2 - \frac{\alpha}{2 \pi})  \frac{\omega_i}{\bm\omega_{\max}} },
	\end{aligned}
\end{multline}
where we have just employed the estimates $[{2 - {(2 - \frac{\alpha}{2 \pi}) \frac{\omega_l}{\bm\omega_{\max}} }}]^{-1} \leq \frac{2\pi}{\alpha}$ for all $1 \leq l \leq k$ to get the last inequality.
From \eqref{eq:A1k-lim} and \eqref{eq:A2k-lim}, it follows that
\begin{multline*}
	\limsup\limits_{\epsilon \to 0^+} [A_1^{k, \epsilon}+A_2^{k, \epsilon}] \\
	\leq \pi  (2R)^{(2 - \frac{\alpha}{2\pi}) \frac{\sum_{i=1}^k \omega_i }{\bm{\omega}_{\max}} }\prod_{i=1}^{k}  \left( \frac{1}{\rho_i} \right)^{(2 - \frac{\alpha}{2 \pi})  \frac{\omega_i}{\bm\omega_{\max}} }\left[R^2 +\left(\frac{4\pi}{\alpha} -1\right) \sum_{i=1}^{k} \rho_i^2\right].
\end{multline*}
Combining this with \eqref{eq:yk-esti1} and \eqref{eq:yk-esti2} yields
\begin{multline*}
	\limsup\limits_{\epsilon \to 0^+} \int_{\Omega} \exp\left[\frac{(4 \pi - \alpha)}{\bm\omega_{\max}}{y}^{k, \epsilon}(x)\right]  dx \\
	\begin{aligned}
		& \leq \pi  (2R)^{(2 - \frac{\alpha}{2\pi}) \frac{\sum_{i=1}^k \omega_i}{\bm{\omega}_{\max}} }\prod_{i=1}^{k}  \left( \frac{1}{\rho_i} \right)^{(2 - \frac{\alpha}{2 \pi})  \frac{\omega_i}{\bm\omega_{\max}} }\left[R^2 +\left(\frac{4\pi}{\alpha} -1\right) \sum_{i=1}^{k} \rho_i^2\right].
	\end{aligned}
\end{multline*}
This, along with \eqref{eq:yi-epsilon-limit}, the nonnegativity of $y^{k, \epsilon}$, and Fatou's lemma, yields \eqref{eq:exponential-esti-Dirac-k}.
\end{proof}

\begin{proposition}
	\label{prop:exponential-esti-extended}
	Let 
	$\alpha \in (0, 4\pi)$,
	$\psi \in L^p(\Omega)$ with $p>1$, and let $\bm{\omega} \in \ell^1\backslash \{\bm{0}\}$ satisfy  $\omega_i \geq 0$ for all $i \geq 1$ and \eqref{eq:omega-rho-ass}.
	Assume that $g: \Omega \times \R \to \R$ is a Carathéodory function such that, for a.e. $x \in \Omega$, the mapping $ t \mapsto g(x,t)$ is nondecreasing and continuous,  and satisfies $g(x,0) =0$.
	Assume further that $y \in W^{1,q}_0(\Omega)$, $1 \leq q <2$, is a weak solution to
	\begin{equation}
		\label{eq:semilinear-Dirac}
		-\Delta y + g(x,y)  = \psi(x) + \sum_{i=1}^\infty \omega_i \delta_{x_i} \, \text{in } \Omega, \quad
		y = 0 \, \text{on } \Gamma.
	\end{equation}
		Then
		a constant $C>0$, independent of $\alpha$, $g, \psi$, and $\bm{\omega}$, exists and satisfies 
		\begin{multline}
			\label{eq:exponential-esti-Dirac-semilinear}
			\int_\Omega \exp \left[\frac{(4\pi - \alpha)y(x)}{\bm\omega_{\max}} \right] dx \leq \frac{4\pi^2 R^2}{\alpha} (2R)^{(2 - \frac{\alpha}{2\pi}) \frac{\norm{\bm{\omega}}_{\ell^1}}{\bm{\omega}_{\max}} } \\
			\times
			\exp\left\{\frac{1}{\bm\omega_{\max}}\left(2 - \frac{\alpha}{2 \pi}\right)\left[C\norm{\psi}_{L^p(\Omega)} +   L(\bm{\omega}, \bm{\rho})\right]  \right\}
		\end{multline}
		with $R := \frac{1}{2} \diam(\Omega)$.
\end{proposition}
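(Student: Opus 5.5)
The plan is a comparison argument. Since $g$ is nondecreasing with $g(x,0)=0$, the absorption term only pushes $y$ downward. So $y$ should lie below the solution of the linear problem with the same data, and that solution splits into a Dirac part, handled by \cref{prop:Poisson-exponential-esti-inf-many-Dirac}, and a $\psi$ part, handled by elliptic regularity.

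First, I introduce $z_1 \in W^{1,q}_0(\Omega)$ solving $-\Delta z_1 = \sum_i \omega_i\delta_{x_i}$ and $z_2 \in H^1_0(\Omega)$ solving $-\Delta z_2 = \psi^+$, both with zero boundary values. For $z_2$: in two dimensions, $\psi^+ \in L^p$ with $p>1$ gives $z_2 \in W^{2,p}$ locally. Globally, on a Lipschitz domain, Stampacchia's truncation method gives $\|z_2\|_{L^\infty(\Omega)} \le C\|\psi\|_{L^p(\Omega)}$, with $C$ depending only on $\Omega$ and $p$. This is the constant $C$ in the statement; it is clearly independent of $\alpha$, $g$, $\psi$ and $\bm\omega$. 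I also note $z_2 \ge 0$.

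Second, I show $y \le z_1 + z_2$ a.e. Set $w := y - z_1 - z_2$. Then $w$ satisfies $-\Delta w + g(x,y) = \psi - \psi^+ \le 0$ in the sense of distributions, and the measures cancel, so $w \in W^{1,q}_0$ with $\Delta w \in L^1$. Testing is the main obstacle, because $w$ is not in $H^1$. I would use Kato's inequality in the Brezis--Marcus--Ponce form (\cite[Prop.~4.B.1]{BresisMarcusPonce2007}): $-\Delta w^+ \le \chi_{\{w>0\}}(-\Delta w)$. On $\{w>0\}$ we have $y > z_1+z_2 \ge 0$, since $z_1 \ge 0$ by the maximum principle and $\omega_i \ge 0$. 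Hence $g(x,y) \ge g(x,0)=0$ there, so $\chi_{\{w>0\}}(-\Delta w) = \chi_{\{w>0\}}(\psi-\psi^+-g(x,y)) \le 0$. Thus $-\Delta w^+ \le 0$ with $w^+ = 0$ on $\Gamma$, and $w^+ = 0$ follows.

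An alternative is to approximate the Diracs by mollifiers as in the previous proof, compare the smooth problems, and pass to the limit. That route needs uniqueness or stability of $y$, which is why I prefer Kato. One small point: the hypothesis $g(x,y) \in L^1$ is implicit in the notion of weak solution.

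Finally, the constant-free exponent. Since $y \le z_1 + z_2 \le |z_1| + C\|\psi\|_{L^p}$, I get
\[
\int_\Omega \exp\Big[\tfrac{(4\pi-\alpha)y}{\bm\omega_{\max}}\Big]dx \le \exp\Big[\tfrac{(4\pi-\alpha)C\|\psi\|_{L^p(\Omega)}}{\bm\omega_{\max}}\Big]\int_\Omega \exp\Big[\tfrac{(4\pi-\alpha)|z_1|}{\bm\omega_{\max}}\Big]dx .
\]
The remaining integral is bounded by \cref{prop:Poisson-exponential-esti-inf-many-Dirac}. That proposition assumes $\omega_i>0$ for all $i$, whereas here only $\omega_i \ge 0$. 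Its proof works verbatim, because indices with $\omega_i = 0$ contribute factor $1$; alternatively, apply it to $\omega_i+\eta\,2^{-i}\rho_i^{\,\cdot}$-type perturbations and let $\eta\to0$. Note that $4\pi-\alpha = 2\pi(2-\frac{\alpha}{2\pi})$. Absorbing the factor $2\pi$ into $C$ turns the exponent into $\frac{1}{\bm\omega_{\max}}(2-\frac{\alpha}{2\pi})\,C\|\psi\|_{L^p}$, which matches \eqref{eq:exponential-esti-Dirac-semilinear} exactly.
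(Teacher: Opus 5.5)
Your proof is correct, but it is organized differently from the paper's.

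The paper works with the \emph{nonlinear} auxiliary problem $-\Delta y_0+g(x,y_0)=\psi$ and quotes $\norm{y_0}_{L^\infty(\Omega)}\le C\norm{\psi}_{L^p(\Omega)}$ for it. It then applies the comparison principle twice. First it shows $y\ge y_0$, using $\omega_i\ge 0$; by monotonicity this gives the everywhere ordering $-g(x,y)+\psi\le -g(x,y_0)+\psi$. Then a plain linear comparison yields $y\le y_0+y_1$, where $y_1$ is your $z_1$.

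You use only linear auxiliary problems, with $\psi^+$ in place of $\psi$. You also need the sign of the right-hand side only on $\{w>0\}$, which is what Kato's inequality up to the boundary exploits. Because $z_1+z_2\ge 0$, monotonicity and $g(x,0)=0$ give $g(x,y)\ge 0$ on that set, so one comparison suffices.

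What your route buys:
\begin{itemize}
\item No existence or $L^\infty$ theory is needed for a semilinear equation with a general Carath\'eodory $g$.
\item The constant visibly depends only on $\Omega$ and $p$.
\item Nothing is lost in the final bound, since $\norm{\psi^+}_{L^p(\Omega)}\le\norm{\psi}_{L^p(\Omega)}$.
\end{itemize}
In exchange, the paper's route needs only the packaged comparison principle rather than Kato's inequality itself.

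You also correctly notice that \cref{prop:Poisson-exponential-esti-inf-many-Dirac} is stated for $\omega_i>0$, while here only $\omega_i\ge 0$ is assumed; the paper applies it without comment. Your fix is the right one: indices with $\omega_i=0$ contribute a factor $1$ in that proof, and the ball estimate of \cref{lem:Poisson-regularized-point-source} holds trivially for $m=0$. The perturbation alternative you sketch is unnecessary.
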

\begin{proof}
Let $y_0$ and $y_1$ be the unique solutions to 
\begin{align*}
	& 	\left\{
	\begin{aligned}
		-\Delta y_0 + g(x,y_0) & = \psi  && \text{in } \Omega, \\
		y_0 &= 0 && \text{on } \Gamma
	\end{aligned}
	\right. 
	\quad
	\text{and} \quad
	\left\{
	\begin{aligned}
		-\Delta y_1 & =  \sum_{i=1}^\infty \omega_i \delta_{x_i} && \text{in } \Omega, \\
		y_1 &= 0 && \text{on } \Gamma,
	\end{aligned}
	\right.
\end{align*}	
respectively. 	
Because of the fact that $\psi \in L^p(\Omega)$ with $p> 1 = \frac{N}{2}$, we obtain $y_0 \in C(\overline{\Omega})$ and
\begin{equation}
	\label{eq:y0-Linfty}
	\norm{y_0}_{L^\infty(\Omega)} \leq C \norm{\psi}_{L^p(\Omega)}
\end{equation}
for some constant $C> 0$; see, e.g. \cite[Thm.~4.7]{Troltzsch2010}. 
Moreover, by \Cref{prop:Poisson-exponential-esti-inf-many-Dirac}, one has
\begin{equation}
	\label{eq:exponential-y1}
	\int_\Omega \exp \left[\frac{(4\pi - \alpha)y_1(x)}{\bm\omega_{\max}} \right] dx \leq \frac{4\pi^2 R^2}{\alpha} (2R)^{(2 - \frac{\alpha}{2\pi}) \frac{\norm{\bm{\omega}}_{\ell^1}}{\bm{\omega}_{\max}} } 
	\exp\left[\frac{1}{\bm\omega_{\max}}\left(2 - \frac{\alpha}{2 \pi}\right)   L(\bm{\omega}, \bm{\rho})\right].
\end{equation}
On the other hand, from the comparison principle; see, e.g. \cite[Cor.~4.B.2]{BresisMarcusPonce2007}\footnote{Although \cite[Cor.~4.B.2]{BresisMarcusPonce2007} considers the case where $g$ does not depend on $x$, it remains valid for the more general case $g= g(x,t)$}, and the nonnegativity of all $\omega_i$, $i \geq 1$, there holds $y \geq y_0$ a.e. in $\Omega$. From this and the nondecreasing property of the mapping $t \mapsto g(x,t)$ for a.e. $x \in \Omega$, we have $g(x,y(x)) \geq g(x,y_0(x))$ a.e. $x \in \Omega$. Consequently, one has
\[
	- g(x,y) + \psi + \sum_{i=1}^\infty \omega_i \delta_{x_i} \leq -g(x,y_0) + \psi  + \sum_{i=1}^\infty \omega_i \delta_{x_i}.
\]
Combining this with the comparison principle, it follows that
\[
	y \leq y_0 + y_1 \quad \text{a.e. in } \Omega.
\]
This, together with \eqref{eq:y0-Linfty} and \eqref{eq:exponential-y1}, gives \eqref{eq:exponential-esti-Dirac-semilinear}.
\end{proof}

\medskip
In the remainder of this section, we establish the existence and uniqueness of solutions to the linearized state equation. This result will be used in the next section to define the generalized differentiability of the control-to-state mapping.
\begin{lemma}[{cf. \cite[Lem.~3.1]{Nhu2025}}]
	\label{lem:linearization-state-equation}
	Assume that $y \in L^1(\Omega)$ satisfying $e^y \in L^r(\Omega)$ with $r >1$. Let $\bm{u} = (u_i) \in \ell^1$ and $\psi \in L^1(\Omega)$ be arbitrary but fixed. Then, there exists a unique $z \in W^{1,q}_0(\Omega)$ for all $1 \leq q < 2$ satisfying
	\begin{equation}
		\label{eq:linearization-state-equation}
		-\Delta z + e^y z  = \psi + \sum_{i=1}^\infty u_i \delta_{x_i} \, \text{in } \Omega \quad
		z = 0 \, \text{on } \Gamma.
		%
	\end{equation}
	Furthermore, a constant $C>0$ independent of $y$, $\psi$, and $\bm{u}$ exists and satisfies
	\begin{equation}
		\label{eq:linearization-state-equation-a-priori-esti}
		\norm{z}_{W^{1,q}_0(\Omega)} \leq C [\norm{\psi}_{L^1(\Omega)} + \norm{\bm{u}}_{\ell^1} ].
	\end{equation}
\end{lemma}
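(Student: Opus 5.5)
We plan to treat \eqref{eq:linearization-state-equation} as a linear elliptic equation with a nonnegative potential $a := e^y \in L^r(\Omega)$, $r>1$, and a measure datum $\mu := \psi + \sum_{i=1}^\infty u_i \delta_{x_i}$. Since $\bm{u} \in \ell^1$, $\mu$ is a finite Radon measure with $\norm{\mu}_{\mathcal{M}(\Omega)} \leq \norm{\psi}_{L^1(\Omega)} + \norm{\bm{u}}_{\ell^1}$. Hence the whole statement reduces to the Stampacchia/Boccardo--Gallouët theory for $-\Delta z + a z = \mu$ with $a \geq 0$, $a \in L^r$, $r>1$, in two dimensions. We will get existence by approximation, and the estimate and uniqueness by duality.

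\textbf{Step 1: a priori estimate for regular data.} Take $\mu_n \in L^\infty(\Omega)$, for instance mollifications of the Dirac masses plus truncations of $\psi$ and of the series, with $\mu_n \rightharpoonup^* \mu$ in $\mathcal{M}(\Omega)$ and $\norm{\mu_n}_{L^1} \leq \norm{\mu}_{\mathcal{M}}$. We also truncate the potential, $a_n := \min\{a,n\}$. By Lax--Milgram there is a unique $z_n \in H^1_0(\Omega)$ solving $-\Delta z_n + a_n z_n = \mu_n$. The key bound
\[
\norm{z_n}_{W^{1,q}_0(\Omega)} \leq C \norm{\mu_n}_{L^1(\Omega)}, \qquad 1 \le q<2,
\]
with $C$ depending only on $\Omega$ and $q$ (not on $a_n$), is obtained by the Boccardo--Gallouët argument. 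We test with $T_k(z_n)$ and with $T_{k+1}(z_n)-T_k(z_n)$; the absorption term $a_n z_n T_k(z_n) \geq 0$ can simply be dropped. A convenient alternative is duality: for $\phi$ in $W^{1,q'}_0$-type test spaces, solve the adjoint problem $-\Delta w + a_n w = \text{div}\, F$ with $F \in L^{q'}$, $q'>2$. Then $\norm{w}_{L^\infty} \leq C\norm{F}_{L^{q'}}$ uniformly in $a_n \geq 0$, by Stampacchia truncation, since the nonnegative potential only helps. Testing then gives the bound. Also $\norm{a_n z_n}_{L^1} \leq \norm{\mu_n}_{L^1}$, which follows by testing with $\text{sign}$-approximations $T_\varepsilon(z_n)/\varepsilon$.

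\textbf{Step 2: passage to the limit.} By the uniform bound and compact embedding, a subsequence satisfies $z_n \rightharpoonup z$ in $W^{1,q}_0$, $z_n \to z$ in $L^s$ for all $s<\infty$, and $z_n \to z$ a.e. The step we expect to be the main obstacle is passing to the limit in $a_n z_n$. We will use that $a \in L^r$, $r>1$, and that $z_n$ is bounded in $L^{s}$ with $s = r' < \infty$, which holds because $W^{1,q}_0 \hookrightarrow L^s$ for every $s<\infty$ in 2D once $q$ is close to $2$. Then $a_n z_n \to a z$ in $L^1$: we have a.e. convergence, and uniform integrability follows from Hölder, $\int_E |a_n z_n| \le \norm{a}_{L^r(E)}\norm{z_n}_{L^{r'}}$. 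Thus $z$ is a weak solution for test functions in $C^1_0(\overline\Omega)$, and the estimate \eqref{eq:linearization-state-equation-a-priori-esti} follows from weak lower semicontinuity.

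\textbf{Step 3: uniqueness.} Let $z$ be the difference of two solutions, so $z \in W^{1,q}_0$ for all $q<2$ with $-\Delta z + a z = 0$ in the distributional sense, and $az \in L^1$ by Hölder as above. For arbitrary $\phi \in C_c^\infty(\Omega)$, we solve the dual problem $-\Delta w + a w = \phi$. Since $a \in L^r$ with $r>1$, we get $w \in H^1_0 \cap L^\infty$ and in fact $w \in W^{1,s}_0$ for some $s>2$ by $W^{2,r}$-type regularity, as $\Delta w \in L^r$. At least we have $w \in W^{1,q'}_0$ for $q$ close to $2$, which makes it an admissible test function for the equation of $z$ after a density argument that uses $az\in L^1$ and $w \in L^\infty$. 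This yields $\int_\Omega z\phi = 0$, hence $z=0$. The constant $C$ is independent of $y$ precisely because every bound above used only $a \ge 0$, while the integrability $e^y\in L^r$ entered only qualitatively.
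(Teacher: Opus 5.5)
Your proposal is correct. The paper gives no argument of its own: it only says that the proof of \cite[Lem.~3.1]{Nhu2025} carries over with $\bm{u}\in\ell^1$ in place of $\bm{\eta}\in\mathbb{R}^k$. Your self-contained approximation/duality argument supplies exactly this, and the only $\ell^1$-specific input is the one you isolate at the start: $\psi+\sum_{i}u_i\delta_{x_i}$ is a finite measure of total variation at most $\norm{\psi}_{L^1(\Omega)}+\norm{\bm{u}}_{\ell^1}$. After that, the $a$-independent Stampacchia/Boccardo--Gallou\"et bound, the Vitali argument for $a_nz_n\to az$ in $L^1$, and duality give existence, \eqref{eq:linearization-state-equation-a-priori-esti} and uniqueness.

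One justification in Step~3 needs fixing. On a merely Lipschitz domain, $W^{2,r}$ regularity is not available in general, so it cannot be the reason $w\in W^{1,s}_0(\Omega)$ for some $s>2$. That fact is nevertheless true, and it is all you need. Combine Gr\"oger's (or Jerison--Kenig's) $W^{1,s}$-isomorphism theorem for the Dirichlet Laplacian on Lipschitz domains with $\Delta w=aw-\phi\in L^r(\Omega)\hookrightarrow W^{-1,s}(\Omega)$ for $s>2$ close to $2$. This is the same kind of regularity the paper itself uses for the adjoint state $\varphi_{\bm u}\in W^{1,s_*}_0(\Omega)$ with $s_*>2$.
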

\begin{proof}
	The proof follows the same argument as that of \cite[Lem.~3.1]{Nhu2025}, with $\bm{u} \in \ell^1$ replacing $\bm{\eta} \in \mathbb{R}^k$ throughout.
\end{proof}

\section{Finite-dimensional directional differentiability and generalized differentiability}
	\label{sec:differentiability}
	In this section, we first show that the control-to-state operator $S$, which maps each $\bm{u}$ in a certain subset of $\ell^1$ to the associated solution of \eqref{eq:P-state}, is Lipschitz continuous.	
	We then prove that $S$ is directionally differentiable in  directions belonging to finite-dimensional subspaces $\ell^1_k$, $k \geq 1$, at every element $\bm{u}$ for which the exponential Nemytskii superposition operator of $S(\bm{u})$ takes values in $L^r(\Omega)$ for some $r>1$.
	Based on this, we define a generalized directional derivative of $S$ in any direction $\bm{h} \in \ell^1$ as the limit of directional derivatives along the $k$-th truncated directions $\bm{h}_k$, $k \geq 1$. 
	Finally, we investigate the first- and second-order generalized differentiability of the reduced cost functional.
	These notions will be used to derive the optimality conditions for \eqref{eq:P} in the next section.
	
	\subsection{Control-to-state mapping and its properties}
	\label{sec:control2state-mapping}
	
	We now recall the state equation \eqref{eq:P-state}:
	\begin{equation}
		\tag{\ref{eq:P-state}}
		\left\{
		\begin{aligned}
			-\Delta y + \left( e^y -1  \right)  & = f_0 + \sum_{i=1}^\infty u_i \delta_{x_i} && \text{in } \Omega \\
			y &= 0 && \text{on } \Gamma
		\end{aligned}
		\right.
	\end{equation}
	with $\bm{u} = (u_i) \in \ell^1$, $f_0 \in L^p(\Omega)$, $p >1$.  
	
	A function $y \in L^1(\Omega)$ for all $1 \le q < 2$ is called a \emph{(very) weak solution} of \eqref{eq:P-state} if $e^y \in L^1(\Omega)$ and
	\begin{equation}
		\int_\Omega  - y \Delta \phi + (e^y-1)\phi dx = \int_\Omega f_0 \phi dx + \sum_{i =1}^\infty u_i \phi(x_i) \quad \text{for all } \phi \in C^\infty_0(\Omega). 
	\end{equation}
	
	By \cite[Thm.~2]{Vazquez1983} (see also \cite[Thm.~4.7]{BresisMarcusPonce2007} and \cite{Bartolucci2005}), a necessary and sufficient condition for the existence of (very) weak solutions to \eqref{eq:P-state} is 
	\begin{equation}
		\label{eq:SE-existence-solution-assumption}
		\bm{u}_{\max} \leq 4\pi.
	\end{equation} 
	Furthermore, under \eqref{eq:SE-existence-solution-assumption}, the solution to \eqref{eq:P-state} is unique and it belongs to $W^{1,q}_0(\Omega)$ for any $1 \leq q < \frac{N}{N-1} =2$. This leads to the definition of the control-to-state mapping $S$ as follows
	\begin{equation}
		\label{eq:control-2-state-oper}
		\begin{aligned}
			S: \mathcal{D} &\to   W^{1,q}_0(\Omega)\\
			\bm{u} &\mapsto y_{\bm{u}},
		\end{aligned}
	\end{equation}
	where 
	\[
		\mathcal{D} := \left\{ \bm{u} = (u_i) \in \ell^1 \middle| u_i \leq 4 \pi \, \text{for all } i \geq 1 \right\}
	\]
	and $y_{\bm{u}}$ denotes the unique weak solution to \eqref{eq:P-state} associated with $\bm{u}$.

	The Lipschitz continuity of $S$ and of $\exp(S)$ is shown in the next lemma.
\begin{lemma}[{cf. \cite[Lem.~4.1]{Nhu2025} }]
	\label{lem:control2state-oper-continuity}
	For any $q \in [1,2)$, a constant $C = C(q)>0$ exists and satisfies
	\begin{equation}
		\label{eq:SE-apriori-esti}
		\norm{S(\bm{u})}_{W^{1,q}_0(\Omega)} \leq C \left( \norm{f_0}_{L^p(\Omega)} + \norm{\bm{u}}_{\ell^1} \right)
	\end{equation}
	for all $\bm{u} \in \mathcal{D}$. Moreover, there holds
	\begin{equation}
		\label{eq:exp-S-esti-L1}
		\int_{\Omega} \left|e^{S(\bm{u})} -1 \right| dx \leq |\Omega|^{\frac{p-1}{p}} \norm{f_0}_{L^p(\Omega)} + \norm{\bm{u}}_{\ell^1}.
	\end{equation}
	Furthermore, there hold
	\begin{align}
		& \int_{\Omega} \left[ e^{S(\bm{u})} - e^{S(\bm{v})} \right]^{+} dx \leq \sum_{i: u_i \geq v_i} (u_i - v_i), \label{eq:exp-S-pos-continuity-L1} \\
		& \int_{\Omega} \left|e^{S(\bm{u})} - e^{S(\bm{v})} \right| dx \leq \norm{\bm{u} - \bm{v}}_{\ell^1} \label{eq:exp-S-continuity-L1}
		\intertext{and}
		& \norm{S(\bm{u}) - S(\bm{v})}_{W^{1,q}_0(\Omega)} \leq C \norm{\bm{u} - \bm{v}}_{\ell^1} \label{eq:S-continuity-W1q}
	\end{align}
	for all $\bm{u}, \bm{v} \in \mathcal{D}$. 
\end{lemma}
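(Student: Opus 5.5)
The plan is to obtain all estimates first for truncated data and then pass to the limit, following the scheme of \cite[Lem.~4.1]{Nhu2025}. The key tool is the Brezis--Ponce type $L^1$-contraction (Kato) inequality for equations with monotone absorption and measure data: if $y_1,y_2$ solve $-\Delta y_j + g(y_j) = \mu_j$ with $g$ nondecreasing, then
\[
	\int_\Omega [g(y_1)-g(y_2)]^+ dx \leq \norm{(\mu_1-\mu_2)^+}_{\mathcal{M}(\Omega)},
	\qquad
	\int_\Omega |g(y_1)-g(y_2)| dx \leq \norm{\mu_1-\mu_2}_{\mathcal{M}(\Omega)};
\]
see, e.g., \cite[Cor.~4.B.5]{BresisMarcusPonce2007}. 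This rests on testing with a truncation $T_\varepsilon(y_1-y_2)/\varepsilon$ and using Kato's inequality $\Delta (w)^+ \geq \chi_{\{w>0\}}\Delta w$ in the measure sense.

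\emph{Step 1: the $L^1$ bounds.} Take $g(t)=e^t-1$ and $\mu_{\bm u}=f_0+\sum_i u_i\delta_{x_i}$. Since $(\mu_{\bm u}-\mu_{\bm v})^+=\sum_{i:u_i\ge v_i}(u_i-v_i)\delta_{x_i}$ and $\norm{\mu_{\bm u}-\mu_{\bm v}}_{\mathcal M}=\norm{\bm u-\bm v}_{\ell^1}$, the two inequalities above give \eqref{eq:exp-S-pos-continuity-L1} and \eqref{eq:exp-S-continuity-L1} directly. For \eqref{eq:exp-S-esti-L1}, compare with the trivial solution $y\equiv 0$, which corresponds to $\mu=0$. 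This gives $\int_\Omega|e^{S(\bm u)}-1|\,dx\le\norm{f_0}_{L^1}+\norm{\bm u}_{\ell^1}$, and Hölder's inequality turns $\norm{f_0}_{L^1}$ into $|\Omega|^{(p-1)/p}\norm{f_0}_{L^p}$.

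\emph{Step 2: the $W^{1,q}_0$ bounds.} Write $S(\bm u)-S(\bm v)$ as the solution of the Poisson problem with datum $\mu_{\bm u}-\mu_{\bm v}-(e^{S(\bm u)}-e^{S(\bm v)})$. This measure has total variation at most $2\norm{\bm u-\bm v}_{\ell^1}$ by Step 1. Stampacchia's estimate $\norm{w}_{W^{1,q}_0}\le C(q)\norm{\mu}_{\mathcal M}$ for $q<2$ then yields \eqref{eq:S-continuity-W1q}. Applying the same argument with $\bm v=\bm 0$ and $\mu=\mu_{\bm u}-(e^{S(\bm u)}-1)$, together with \eqref{eq:exp-S-esti-L1}, yields \eqref{eq:SE-apriori-esti}.

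\emph{Main obstacle: justifying the contraction inequality for the borderline data.} The weights satisfy $u_i\le 4\pi$, and a weight equal to $4\pi$ may occur; in addition, infinitely many Dirac masses are present. I would try first to apply the result of Brezis--Marcus--Ponce directly, since it covers good measures with reduced limits and requires no pointwise bound on the weights. By \cite{Vazquez1983}, for $\bm u\in\mathcal D$ the measure $\mu_{\bm u}$ is itself good, so its reduced measure coincides with $\mu_{\bm u}$.

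If a direct citation is unavailable, I would instead approximate. First replace $\bm u$ by $\bm u_k$ with components scaled to $(1-1/k)u_i$ for $i\le k$, so that only finitely many Diracs appear, each with weight below $4\pi$. For such data the solutions satisfy $e^{y}\in L^r$ with $r>1$ by \cref{prop:exponential-esti-extended}, and the truncation test functions are legitimate. Then pass to the limit using the stability $S(\bm u_k)\to S(\bm u)$ in $L^1$ and $e^{S(\bm u_k)}\to e^{S(\bm u)}$ in $L^1$. This stability follows from the Cauchy property given by \eqref{eq:exp-S-continuity-L1} applied to the approximants, combined with uniqueness of the limit solution. Finally, lower semicontinuity of the left-hand sides of the estimates finishes the proof.
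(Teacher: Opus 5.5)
Your proposal is correct and follows the route the paper defers to in \cite[Lem.~4.1]{Nhu2025}: the Brezis--Marcus--Ponce Kato-type $L^1$-contraction for $g(t)=e^t-1$, with the $f_0$-parts cancelling in $\mu_{\bm u}-\mu_{\bm v}$, followed by Stampacchia's $W^{1,q}_0$ estimate for the linear problem whose datum is the resulting finite measure. Your ``main obstacle'' is not really one: $e^{S(\bm u)}\in L^1(\Omega)$ is part of the notion of solution, so $\Delta(S(\bm u)-S(\bm v))$ is already a finite measure for all $\bm u,\bm v\in\mathcal{D}$ and the direct argument suffices, which means you can drop the approximation fallback (its claim that truncations can be tested against Dirac data would itself need justification).
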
	
\begin{proof}
	The proof of the lemma is similar to that of \cite[Lem.~4.1]{Nhu2025} with minor modifications.
\end{proof}
	
	The following lemma shows that the composition $\exp(S)$ of the control-to-state operator with the exponential Nemytskii superposition operator takes values in $L^r(\Omega)$ for some $r>1$ at points satisfying \eqref{eq:omega-rho-ass}.
\begin{lemma} 
	\label{lem:expS-esti}
	Assume that $\bm{u} = (u_i) \in \mathcal{D}$ satisfies \eqref{eq:omega-rho-ass} with $\bm{\omega} := \bm{u}$ and
	\begin{equation}
		\label{eq:ui-less-4pi}
		u_i < 4 \pi \quad \text{for all } i \geq 1.
	\end{equation}
	There, there exists a constant $r := r(\bm{u}) >1$ such that
	\begin{equation}
		\label{eq:exp-S-belong-Lr}
		e^{S(\bm{u})} \in L^r(\Omega).
	\end{equation}
	Moreover, there holds
	\begin{equation}
		\label{eq:exp-S-belong-Lr-order}
		e^{S(\bm{v})} \in L^r(\Omega) \quad \text{for all } \bm{v} = (v_i) \in \mathcal{D}, v_i \leq u_i, i \geq 1.
	\end{equation}
\end{lemma}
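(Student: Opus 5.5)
The plan is to apply \cref{prop:exponential-esti-extended} with $g(x,t) := e^t - 1$, $\psi := f_0$ and $\bm{\omega} := \bm{u}^+ = (u_i^+)$. The difficulty is that $\bm{u}$ may have negative entries, while the proposition requires nonnegative weights. I would therefore first use comparison. Let $\bar y := S(\bm{u}^+)$. Since $\bm{u}^+ \in \mathcal{D}$ and $\bm{u} \le \bm{u}^+$, the comparison principle (\cite[Cor.~4.B.2]{BresisMarcusPonce2007}) gives $S(\bm{u}) \le \bar y$ a.e. This uses that $t\mapsto e^t-1$ is nondecreasing. The same argument gives $S(\bm{v}) \le S(\bm{u}) \le \bar y$ for any $\bm{v}\in\mathcal{D}$ with $\bm{v}\le\bm{u}$. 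Consequently $e^{S(\bm{v})} \le e^{\bar y}$, and it suffices to show $e^{\bar y}\in L^r(\Omega)$ for some $r>1$. Both \eqref{eq:exp-S-belong-Lr} and \eqref{eq:exp-S-belong-Lr-order} then follow at once.

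If $\bm{u}^+ = \bm{0}$, then $\bar y = S(\bm{0})$ solves $-\Delta \bar y + e^{\bar y}-1 = f_0$. Since $p>1$, \eqref{eq:y0-Linfty} gives $\bar y\in L^\infty(\Omega)$, so any $r>1$ works. Otherwise $\bm{u}^+\in\ell^1\setminus\{\bm 0\}$ has nonnegative entries, and $L(\bm{u}^+,\bm\rho) = L(\bm{u},\bm\rho)<\infty$ by assumption. \Cref{prop:exponential-esti-extended} then yields, for every $\alpha\in(0,4\pi)$,
\[
\int_\Omega \exp\Big[\tfrac{4\pi-\alpha}{\bm{u}^+_{\max}}\,\bar y\Big]\,dx<\infty .
\]
Note that $\bm{u}^+_{\max}$ is attained: the entries tend to $0$ because $\bm{u}\in\ell^1$, and at least one entry is positive. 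By \eqref{eq:ui-less-4pi} we therefore get $\bm{u}^+_{\max} = \max_i u_i^+ < 4\pi$, and this strict inequality is the key point. Choose $\alpha>0$ small enough that $r:= (4\pi-\alpha)/\bm{u}^+_{\max} > 1$, for instance $\alpha := (4\pi - \bm{u}^+_{\max})/2$. Then $\int_\Omega e^{r\bar y}\,dx<\infty$, i.e.\ $e^{\bar y}\in L^r(\Omega)$.

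The main obstacle is justifying the pointwise comparison $S(\bm{v})\le S(\bm{u}^+)$ for very weak solutions with measure data, and checking that $S(\bm{u}^+)$ really is the weak solution to which \cref{prop:exponential-esti-extended} applies. The proposition itself only needs $y$ to be a weak solution in $W^{1,q}_0$, which holds by the well-posedness statement under \eqref{eq:SE-existence-solution-assumption}. For the comparison I would cite \cite[Cor.~4.B.2]{BresisMarcusPonce2007}, with $\mu_1-\mu_2 = \sum_i (u_i^+-v_i)\delta_{x_i}\ge 0$, exactly as in the proof of \cref{prop:exponential-esti-extended}.
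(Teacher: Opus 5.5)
Your proof is correct and follows the paper's argument. It makes the same comparison-principle reduction of both claims to bounding $e^{S(\bm{u}^+)}$, and the same case split according to whether $\bm{u}^+ = \bm{0}$. The only difference is packaging: you invoke \cref{prop:exponential-esti-extended} with $g(x,t)=e^t-1$, $\psi=f_0$, $\bm{\omega}=\bm{u}^+$, whereas the paper inlines that proposition's proof, showing $S(\bm{u}^+)\le S(\bm{0})+z$ with $z$ the Poisson solution for $\sum_i u_i^+\delta_{x_i}$ and applying \cref{prop:Poisson-exponential-esti-inf-many-Dirac} with $\alpha=4\pi-r\max_i u_i$.
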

\begin{proof}
	It suffices to prove \eqref{eq:exp-S-belong-Lr}, since \eqref{eq:exp-S-belong-Lr-order} follows from \eqref{eq:exp-S-belong-Lr} and the comparison principle; see, e.g. \cite[Cor.~4.B.2]{BresisMarcusPonce2007}.	
	Since $\bm{u} \in \ell^1$, $u_i \to 0$ as $i \to \infty$. From this  and \eqref{eq:ui-less-4pi}, there exists an integer $i_0 \geq 1$  such that
	\[
		\max \left\{ u_i \middle | i \geq 1 \right\} = u_{i_0} < 4 \pi.
	\]
	Let us put $y:= S(\bm{u})$ and let $y_0 := S(\bm{0})$ be the unique solution to
	\begin{equation*}
		\left\{
		\begin{aligned}
			-\Delta y_0 + \left( e^{y_0} -1  \right)  & = f_0  && \text{in } \Omega \\
			y_0 &= 0 && \text{on } \Gamma.
		\end{aligned}
		\right.
	\end{equation*}
	By Theorem 4.7 in \cite{Troltzsch2010} and the fact that $f_0 \in L^p(\Omega)$ with $p>1$, one has $y_0 \in C(\overline{\Omega})$ and
	\begin{equation}
		\label{eq:y0-bounded}
		\norm{y_0}_{C(\overline{\Omega})} \leq C
	\end{equation}
	for some constant $C>0$.

	We now consider two cases:
	
	\noindent\emph{Case I: $u_{i_0} \leq 0$.} In this case, there holds $u_i \leq 0$ for all $i \geq 1$. The comparison; see, e.g. \cite[Cor.~4.B.2]{BresisMarcusPonce2007}, implies that $y \leq y_0$ a.e.
	in $\Omega$. Thanks to \eqref{eq:y0-bounded}, we have  $e^y \in L^\infty(\Omega) \hookrightarrow L^r(\Omega)$ for any $r >1$.

	\noindent\emph{Case II: $u_{i_0} \in (0, 4 \pi)$.} Setting now $y_{+} := S(\bm{u}^+)$ with $\bm{u}^{+} := (u_i^{+})$ and using the comparison principle; see, e.g. \cite[Cor.~4.B.2]{BresisMarcusPonce2007} yield
	\begin{equation}
		\label{eq:y-y0-yplus-comparison}
		y, y_0 \leq y_{+} \quad \text{a.e. in } \Omega.
	\end{equation}
	This implies that
	\begin{equation}
		\label{eq:exp-y0-yplus-comparison}
		e^{y_{+}} - e^{y_0} \geq 0 \quad \text{a.e. in } \Omega.
	\end{equation}
	By subtracting the equations for $y_{+}$ and $y_0$, we obtain
	\[
		\left\{
		\begin{aligned}
			-\Delta (y_{+} - y_0) + \left( e^{y_{+}} -e^{y_0}  \right)  & = \sum_{i=1}^\infty u_i^+ \delta_{x_i}  && \text{in } \Omega \\
			y_{+} - y_0 &= 0 && \text{on } \Gamma.
		\end{aligned}
		\right.
	\]
	The comparison principle, together with \eqref{eq:exp-y0-yplus-comparison},  gives 
	$y_{+} - y_0 \leq z$ for a.e.  $\Omega$. Here  $z$ denotes the unique solution to 
	\[
		\left\{
		\begin{aligned}
			-\Delta z  & = \sum_{i=1}^\infty u_i^+ \delta_{x_i}  && \text{in } \Omega \\
			z &= 0 && \text{on } \Gamma.
		\end{aligned}
		\right.
	\]
	This, \eqref{eq:y-y0-yplus-comparison} and \eqref{eq:y0-bounded} lead to
	\begin{equation}
		\label{eq:y-y0-z-esti}
		y \leq y_{+} \leq y_0 + z \leq C + z  \quad \text{a.e. in } \Omega.
	\end{equation}
	Moreover, by fixing $r \in (1, \frac{4\pi}{u_{i_0}})$ and applying \cref{prop:Poisson-exponential-esti-inf-many-Dirac} to the equation for $z$ with $\alpha := 4\pi - r u_{i_0} \in (0,4\pi)$, one has  
	\begin{equation}
		\label{eq:ey-Ltau}
		\norm{e^z}_{L^r(\Omega)}^r < \infty,
	\end{equation}
	where we have just used the assumption \eqref{eq:omega-rho-ass} for  $\bm{\omega} := \bm{u}$  and the estimate \eqref{eq:exponential-esti-Dirac-inf}.
\end{proof}

\subsection{Directional differentiability of the control-to-state mapping}
\label{sec:control2state-generalized-diff}

At the beginning of this subsection, we establish the directional differentiability of the control-to-state operator with respect to directions in finite-dimensional subspaces $\ell^1_k$, $k \geq 1$, of $\ell^1$. This result, together with the chain rule, will be used in the next subsection to prove the same property for the reduced objective functional.

\begin{theorem}
	\label{thm:directional-diff-control2state}
	Let $\bm{u} \in \mathcal{D}$ be such that $e^{S(\bm{u})} \in L^r(\Omega)$ for some constant $r >1$ and let $k \geq 1$ be an arbitrary, but fixed integer. 
	Then, $S$ is directional differentiable at $\bm{u}$ in any direction $\bm{h} \in \ell^1_k$ and $z:= S'(\bm{u})\bm{h}$ uniquely solves
	\begin{equation}
		\label{eq:1st-directional-deri-S}
		-\Delta z + e^{S(\bm{u})} z  = \sum_{i=1}^{k} h_i \delta_{x_i} \, \text{in } \Omega \quad
		z = 0 \, \text{on } \Gamma.
		%
	\end{equation}
	Moreover, there hold for any $s \geq 1$ that
	\begin{align}
		& z_\rho \to z \quad \text{as } \rho \to 0^+ \quad \text{in } L^s(\Omega), \label{eq:zrho-z-limit} \\
		& \frac{e^{\rho z_\rho} - 1 - \rho z_\rho }{\rho} \to 0 \quad \text{as } \rho \to 0^+ \quad \text{in } L^{s}(\Omega) \label{eq:expS-1st-Taylor}
		\intertext{and}
		& \frac{e^{\rho z_\rho} - 1 - \rho z_\rho - \frac{1}{2}\rho^2 z_\rho^2 }{\rho^2} \to 0 \quad \text{as } \rho \to 0^+ \quad \text{in } L^{s}(\Omega) \label{eq:expS-2st-Taylor}	
	\end{align}
	with $z_\rho := \frac{S(\bm{u} + \rho \bm{h}) - S(\bm{u})}{\rho}$. 
\end{theorem}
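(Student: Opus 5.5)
The plan is to control the difference quotient $z_\rho$ pointwise by a fixed function $W$ that does not depend on $\rho$. Once this is in place, the limit passage and both Taylor-type estimates reduce to dominated convergence and Hölder's inequality.

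\emph{Step 1: admissibility of $\bm{u}+\rho\bm{h}$.} I first check that $e^{S(\bm{u})} \in L^r(\Omega)$ forces $u_i < 4\pi/r < 4\pi$ for every $i$.
\begin{itemize}
\item Near $x_i$ one has $S(\bm{u}) \geq \frac{u_i^+}{2\pi}\ln\frac{1}{|x-x_i|} - C$. This follows from the comparison principle, using a local Green function lower bound together with the fact that $S(\bm{u}) \geq S(\bm{v})$ for $\bm{v}$ having only the $i$-th entry $u_i$ and all other entries $\min\{u_j,0\}$.
\item Integrability of $e^{rS(\bm{u})}$ near $x_i$ then requires $r u_i/(2\pi) < 2$.
\end{itemize}
Since $\bm{h} \in \ell^1_k$ has only $k$ nonzero entries, $\bm{u}+\rho\bm{h} \in \mathcal{D}$ for all small $\rho>0$. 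Hence $y_\rho := S(\bm{u}+\rho\bm{h})$ and $z_\rho$ are well defined.

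\emph{Step 2: a $\rho$-independent two-sided bound.} Let $w_\pm \geq 0$ solve $-\Delta w_\pm = \sum_{i=1}^k h_i^\pm \delta_{x_i}$ in $\Omega$ with $w_\pm = 0$ on $\Gamma$, and put $W := w_+ + w_-$.
\begin{itemize}
\item The difference $y_\rho - y$, with $y := S(\bm{u})$, solves $-\Delta(y_\rho - y) + (e^{y_\rho} - e^{y}) = \rho\sum_{i=1}^k h_i\delta_{x_i}$.
\item By the comparison principle \cite[Cor.~4.B.2]{BresisMarcusPonce2007}, arguing as in the proof of \cref{lem:expS-esti}, one should get $-\rho w_- \leq y_\rho - y \leq \rho w_+$. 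Consequently $|z_\rho| \leq W$ and $|\rho z_\rho| \leq \rho W$ a.e.
\item Since only finitely many Diracs appear, \cref{prop:Poisson-exponential-esti-inf-many-Dirac} gives $W \in L^s(\Omega)$ for every $s<\infty$. It also gives $\sup_{\rho \leq \rho_0(s)} \norm{e^{\rho W}}_{L^s(\Omega)} < \infty$, because $e^{\rho W}$ is exponentially integrable with any exponent once $\rho$ is small.
\end{itemize}
This step is the main obstacle: one must make the comparison argument rigorous for very weak solutions with measure data. I would run it on the regularized problems, with $\delta_{x_i}$ replaced by mollifiers, and then pass to the limit, as in \cref{prop:Poisson-exponential-esti-inf-many-Dirac}.

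\emph{Step 3: Taylor-type estimates \eqref{eq:expS-1st-Taylor} and \eqref{eq:expS-2st-Taylor}.} I use the elementary inequalities $|e^t - 1 - t| \leq \frac{1}{2}t^2 e^{|t|}$ and $|e^t - 1 - t - \frac{t^2}{2}| \leq \frac{1}{6}|t|^3 e^{|t|}$. With $t = \rho z_\rho$ and Step 2, the two quotients are bounded pointwise by $\frac{\rho}{2}W^2 e^{\rho W}$ and $\frac{\rho}{6}W^3 e^{\rho W}$, respectively. Hölder's inequality and the uniform bounds of Step 2 then show that both tend to $0$ in $L^s(\Omega)$.

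\emph{Step 4: identification of the limit and \eqref{eq:zrho-z-limit}.} The quotient $z_\rho$ solves, in the very weak sense,
\[
-\Delta z_\rho + a_\rho z_\rho = \sum_{i=1}^k h_i \delta_{x_i}, \qquad a_\rho := e^{y}\int_0^1 e^{t\rho z_\rho}\,dt \geq 0 .
\]
\begin{itemize}
\item By \eqref{eq:S-continuity-W1q}, $z_\rho$ is bounded in $W^{1,q}_0(\Omega)$. Compactness then gives, along a subsequence, $z_\rho \to \tilde z$ weakly in $W^{1,q}_0(\Omega)$ and a.e.
\item Since $|z_\rho| \leq W \in L^s(\Omega)$, dominated convergence upgrades this to convergence in every $L^s(\Omega)$.
\item We have $|a_\rho z_\rho| \leq e^{y} W e^{\rho W}$, and this bound is uniformly integrable by Hölder's inequality with $e^{y} \in L^r(\Omega)$ and Step 2. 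Moreover $a_\rho z_\rho \to e^{y}\tilde z$ a.e., since $\rho z_\rho \to 0$. By Vitali's theorem, $a_\rho z_\rho \to e^{y}\tilde z$ in $L^1(\Omega)$.
\item Passing to the limit in the weak formulation shows that $\tilde z$ solves \eqref{eq:1st-directional-deri-S}.
\end{itemize}
By the uniqueness in \cref{lem:linearization-state-equation}, $\tilde z = z$, so the whole family converges. This yields both the directional differentiability with $S'(\bm{u})\bm{h} = z$ and \eqref{eq:zrho-z-limit}.
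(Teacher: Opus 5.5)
Your proposal is correct, but it is built around a different key estimate than the paper's proof.

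\textbf{The paper's route.} It uses only the one-sided bound $z_\rho\le\hat z_\rho=\rho^{-1}\bigl(S(\bm{u}+\rho\bm{h}^+)-S(\bm{u})\bigr)$.
\begin{enumerate}
\item Exponential integrability of $\hat z_\rho$, uniformly in $\rho$, comes from the nonlinear estimate of \cref{prop:exponential-esti-extended}, whose constant does not depend on $g$.
\item The negative side is controlled only through the Sobolev bound on $z_\rho$ in $L^{s+1}(\Omega)$.
\item The Taylor quotients are dominated by their value at a fixed $\rho_1$, using the monotonicity in \cref{lem:exponential-esti}.
\item A Vitali-type argument (convergence in measure along a subsequence plus an $L^{s+1}$ bound) gives the limits, and the limit is identified by uniqueness.
\end{enumerate}

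\textbf{Your route.} You compare with $S(\bm{u}\pm\rho\bm{h}^{\pm})$ on both sides. This gives the $\rho$-independent envelope $|z_\rho|\le W$, where $W$ is just the Green potential of $\sum_{i\le k}|h_i|\delta_{x_i}$. What this buys:
\begin{enumerate}
\item Everything afterwards is dominated convergence and H\"older's inequality.
\item You only need the linear exponential estimate for finitely many Diracs, not \cref{prop:exponential-esti-extended} or \cref{lem:exponential-esti}.
\item You get explicit $O(\rho)$ rates for both Taylor remainders, even before identifying $z$.
\end{enumerate}
Your Step~1 also makes explicit something the paper uses tacitly: $\bm{u}+\rho\bm{h}$ and $\bm{u}+\rho\bm{h}^+$ belong to $\mathcal D$ for small $\rho$. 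The pointwise ``$-C$'' there is justified because $e^{S(\bm{v})}\le e^{S(\bm{u})}\in L^r(\Omega)$. This makes the regular part of $S(\bm{v})$ near $x_i$ locally bounded. It is worth saying so explicitly.

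\textbf{Two small remarks.}

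First, the comparison in Step~2 needs no regularization. By linearity,
$S(\bm{u}+\rho\bm{h}^+)-S(\bm{u})=\rho w_+-G\bigl[e^{S(\bm{u}+\rho\bm{h}^+)}-e^{S(\bm{u})}\bigr]$.
Here $G$ is the Dirichlet Green operator of $-\Delta$, which is positive, and it is applied to a nonnegative $L^1$ function. The lower bound follows symmetrically.

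Second, and more important: Step~4 stops at weak convergence in $W^{1,q}_0(\Omega)$ and strong convergence in $L^s(\Omega)$. Directional differentiability of $S:\mathcal D\to W^{1,q}_0(\Omega)$ requires $z_\rho\to z$ strongly in $W^{1,q}_0(\Omega)$. This is one line from what you already have. By your Vitali step, $-\Delta(z_\rho-z)=e^{y}z-a_\rho z_\rho\to0$ in $L^1(\Omega)$. The $L^1$-data estimate for the Dirichlet Laplacian then gives the claim, exactly as at the end of the paper's proof.
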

\begin{proof}
	By setting $y_\rho := S(\bm{u} + \rho \bm{h})$ and $y := S(\bm{u})$ and subtracting the equations for $y_\rho$ and $y$, we have	
	\[
		\left\{
			\begin{aligned}
				- \Delta (y_\rho - y) + e^{y_\rho} - e^y & = \rho \sum_{i=1}^k h_i \delta_{x_i} && \text{in } \Omega,\\
				y_\rho - y & = 0 && \text{on } \Gamma,
			\end{aligned}
		\right.
	\]
	or, equivalently,
	\begin{equation}
		\label{eq:zrho-def}
		\left\{
		\begin{aligned}
			- \Delta z_\rho + e^y\frac{e^{\rho z_\rho} - 1}{\rho} & =  \sum_{i=1}^k h_i \delta_{x_i} && \text{in } \Omega,\\
			z_\rho & = 0 && \text{on } \Gamma.
		\end{aligned}
		\right.
	\end{equation}	
	The comparison principle thus implies that
	\begin{equation}
		\label{eq:zrho-zrho-hat}
		z_\rho \leq \hat z_\rho \quad \text{a.e. in } \Omega,
	\end{equation}
	where $\hat z_\rho$ stands for the unique solution to
	\begin{equation}
		\label{eq:hat-zrho}
		\left\{
		\begin{aligned}
			- \Delta \hat z_\rho + e^y\frac{e^{\rho \hat z_\rho} - 1}{\rho} & =  \sum_{i=1}^k h_i^{+} \delta_{x_i} && \text{in } \Omega,\\
			\hat z_\rho & = 0 && \text{on } \Gamma.
		\end{aligned}
		\right.
	\end{equation}
	Clearly, we have $\hat{z}_\rho = \frac{1}{\rho}\left(S(\bm{u} + \rho \bm{h}^{+}) - S(\bm{u}) \right)$ with
	\[
		\bm{h}^{+} := (h_1^+, h_2^+, \ldots, h_k^+,0,0,\ldots) \in \ell^1_k.
	\]
	If $h_i^{+} = 0$ for all $1 \leq i \leq k$, then $\hat z_\rho = 0$. Consequently, \eqref{eq:zrho-zrho-hat} leads to
	\[
		e^{ z_\rho } \in L^\infty(\Omega).
	\]
	Otherwise, we have $\max\{ h_i^+ | 1 \leq i \leq k \} >0$.  
	Applying now \cref{prop:exponential-esti-extended} to \eqref{eq:hat-zrho} yields
	\[
		e^{\rho_0 \hat z_\rho } \in L^1(\Omega) \quad \text{for some } \rho_0 >0.
	\]
	In all cases, we therefore have
	\begin{equation}
		\label{eq:exp-zrho-in-L1}
		e^{\rho_0 z_\rho } \in L^1(\Omega) \quad \text{for some } \rho_0 >0.
	\end{equation}
	On the other hand, by exploiting \eqref{eq:S-continuity-W1q} in \cref{lem:control2state-oper-continuity}, we deduce that
	\[
		\norm{z_{\rho}}_{W^{1,q}_0(\Omega)} \leq C \norm{\bm{h}}_{\ell^1}
	\]
	for some constant $C>0$ and for all $1 \leq q < 2$. The compact embedding $W^{1,q}_0(\Omega) \hookrightarrow L^{s + 1}(\Omega)$ for some $q$ sufficiently close $2$ then indicates that
	\begin{equation}
		\label{eq:zrho-Lr-bounded}
		\norm{z_{\rho}}_{L^{s+1}(\Omega)} \leq C \norm{\bm{h}}_{\ell^1}
	\end{equation}  
	and that
	\begin{equation}
		\label{eq:zrho-limit}
		z_{\rho'} \rightharpoonup z \quad \text{in } W^{1,q}_0(\Omega), \quad  z_{\rho'} \to z \quad \text{in } L^{s+1}(\Omega), \quad \text{and} \quad z_{\rho'} \to z \quad \text{a.e. in } \Omega
	\end{equation}
	for some subsequence $\{\rho'\}$ of $\{\rho \}$ and $z \in W^{1,q}_0(\Omega)$.
	We now show that
	\begin{equation}
		\label{eq:exp-zrho-limit}
		\frac{e^{\rho'  z_{\rho'}} - 1 - \rho' z_{\rho'} }{\rho'} \to 0 \quad \text{in } L^{s}(\Omega),
	\end{equation}
	by applying \eqref{eq:expo-est-1st} and \cite[Chap.~XI, Prop.~3.10]{Visintin1996}. To this end, we conclude from \eqref{eq:expo-est-1st} that
	\[
		 0 \leq A_{\rho'} := \frac{e^{\rho'  z_{\rho'}} - 1 - \rho' z_{\rho'} }{\rho'} \leq \frac{e^{\rho_1  z_{\rho'}} - 1 - \rho_1 z_{\rho'} }{\rho_1}  \quad \text{for all } 0 < \rho' < \rho_1 
	\]
	and for a.e. in $\Omega$ with $\rho_1 := \frac{\rho_0}{s+1}$. Therefore, $\{A_{\rho'}\}$ is bounded in $L^{s+1}(\Omega)$ due to \eqref{eq:exp-zrho-in-L1} and \eqref{eq:zrho-Lr-bounded}.
	Moreover, according to \eqref{eq:zrho-limit}, $A_{\rho'}$ converges in measure to $0$ as $\rho' \to 0$. Thanks to \cite[Chap.~XI, Prop.~3.10]{Visintin1996}, there holds  $A_{\rho'} \to 0$ in $L^{s}(\Omega)$. We then have \eqref{eq:exp-zrho-limit}.
	
	In \eqref{eq:zrho-def}, we choose $\rho := \rho'$. By taking $\rho' \to 0^+$ and using limits \eqref{eq:zrho-limit} as well as \eqref{eq:exp-zrho-limit}, we deduce that $z$ satisfies \eqref{eq:1st-directional-deri-S}. The uniqueness of solutions to \eqref{eq:1st-directional-deri-S} (see \cref{lem:linearization-state-equation}) implies that the limits \eqref{eq:zrho-limit} and \eqref{eq:exp-zrho-limit} hold for the entire sequence $\{\rho\}$, rather than only for the subsequence $\{\rho' \}$. 
	This proves \eqref{eq:zrho-z-limit} and \eqref{eq:expS-1st-Taylor}.
	Furthermore, subtracting \eqref{eq:1st-directional-deri-S} from \eqref{eq:zrho-def} yields
	\begin{equation*}
		\left\{
		\begin{aligned}
			- \Delta (z_\rho - z)  & =  - e^y \left(\frac{e^{\rho z_\rho} - 1}{\rho} - z\right) && \text{in } \Omega,\\
			z_\rho - z & = 0 && \text{on } \Gamma.
		\end{aligned}
		\right.
	\end{equation*}	
	From this, together with \eqref{eq:zrho-z-limit} and \eqref{eq:expS-1st-Taylor}, we have $z_\rho \to z$ in $W^{1,q}_0(\Omega)$, i.e., $S$ is directionally differentiable at $\bm{u}$ in direction $\bm(h)$.

	Finally, the proof of \eqref{eq:expS-2st-Taylor} is similar to that of \eqref{eq:expS-1st-Taylor}, except that we use \eqref{eq:expo-est-2nd} instead of  \eqref{eq:expo-est-1st}.
\end{proof}

At the end of this subsection, we introduce the notion of \emph{generalized directional differentiability} for the control-to-state operator $S$, which will be used later to define the generalized differentiability of the reduced objective functional.
\begin{definition}
	\label{def:generalized-der-S}
	For any $\bm{u} \in \mathcal{D}$ with $e^{S(\bm{u})} \in L^r(\Omega)$ for some $r > 1$, we define a \emph{generalized derivative} of $S$ at $\bm{u}$ as the mapping
	\[
		DS(\bm{u}) : \ell^1 \to W^{1,q}_0(\Omega), \quad 1 \le q < 2,
	\]
	that maps each $\bm{h} \in \ell^1$ to the unique solution $z := DS(\bm{h})$ of
	\begin{equation}
		\label{eq:generalized-deri-S}
		-\Delta z + e^{S(\bm{u})} z = \sum_{i=1}^{\infty} h_i \delta_{x_i}
		\quad \text{in } \Omega, 
		\qquad
		z = 0 \quad \text{on } \Gamma.
	\end{equation}
\end{definition}

\begin{proposition}
	\label{prop:generalized-dir-S}
	Assume that $\bm{u} \in \mathcal{D}$ such that $e^{S(\bm{u})} \in L^r(\Omega)$ for some constant $r >1$. Then, there holds
	\begin{equation}
		\label{eq:generalized-dir-S-limit}
		S'(\bm{u}) \bm{h}_k \to DS(\bm{u}) \bm{h} \quad \text{in } W^{1,q}_0(\Omega) \quad \text{as } k \to \infty.
	\end{equation}
	for all $1 \leq q < 2$ and $\bm{h} \in \ell^1$.
\end{proposition}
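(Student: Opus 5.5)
The plan is to use linearity and the a priori estimate of \cref{lem:linearization-state-equation}. Fix $\bm{u} \in \mathcal{D}$ with $e^{S(\bm{u})} \in L^r(\Omega)$ and $\bm{h} \in \ell^1$, and set $y := S(\bm{u})$.

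First, I identify $S'(\bm{u})\bm{h}_k$ with a generalized derivative. Since $\bm{h}_k \in \ell^1_k$, \cref{thm:directional-diff-control2state} says that $z_k := S'(\bm{u})\bm{h}_k$ is the unique solution of
\[
-\Delta z_k + e^{y} z_k = \sum_{i=1}^k h_i \delta_{x_i} \text{ in } \Omega, \qquad z_k = 0 \text{ on } \Gamma .
\]
The right-hand side equals $\sum_{i=1}^\infty h^k_i \delta_{x_i}$, because $h^k_i = 0$ for $i \geq k+1$. By the uniqueness part of \cref{lem:linearization-state-equation}, applied with $\psi = 0$, it follows that $z_k = DS(\bm{u})\bm{h}_k$ in the sense of \cref{def:generalized-der-S}. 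Let $z := DS(\bm{u})\bm{h}$.

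Second, I exploit linearity. The operator $\bm{v} \mapsto DS(\bm{u})\bm{v}$ is linear: the left-hand side of \eqref{eq:generalized-deri-S} is linear in $z$, and uniqueness holds. Hence $w_k := z - z_k$ is the unique solution of
\[
-\Delta w_k + e^{y} w_k = \sum_{i=k+1}^\infty h_i \delta_{x_i} \text{ in } \Omega, \qquad w_k = 0 \text{ on } \Gamma .
\]
To verify this, subtract the two weak formulations tested against $\phi \in C_0^\infty(\Omega)$. Note that $e^y z$ and $e^y z_k$ lie in $L^1(\Omega)$, which is part of the solution concept in \cref{lem:linearization-state-equation}.

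Third, I apply the a priori estimate \eqref{eq:linearization-state-equation-a-priori-esti} with $\psi = 0$ and control $\bm{h} - \bm{h}_k$. This gives, for every $1 \le q < 2$,
\[
\norm{z - z_k}_{W^{1,q}_0(\Omega)} \leq C \norm{\bm{h} - \bm{h}_k}_{\ell^1} = C \sum_{i=k+1}^\infty |h_i| .
\]
The right-hand side is the tail of a convergent series, so it tends to $0$ as $k \to \infty$. This yields \eqref{eq:generalized-dir-S-limit}.

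There is no real obstacle here. The only point requiring care is the justification of linearity and uniqueness for measure data with the unbounded coefficient $e^y \in L^r(\Omega)$, and \cref{lem:linearization-state-equation} covers exactly this. The constant $C$ in that lemma is independent of the data, and this independence is what makes the tail estimate uniform in $k$.
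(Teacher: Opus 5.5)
Your proposal is correct and follows essentially the same route as the paper. You subtract the equations for $S'(\bm{u})\bm{h}_k$ and $DS(\bm{u})\bm{h}$ to get a linearized equation with source $\sum_{i=k+1}^\infty h_i \delta_{x_i}$, then apply the a priori estimate \eqref{eq:linearization-state-equation-a-priori-esti}, which bounds the $W^{1,q}_0(\Omega)$-norm by the $\ell^1$-tail of $\bm{h}$; your extra identification $z_k = DS(\bm{u})\bm{h}_k$ is harmless but not needed.
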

\begin{proof}
	The well-definedness of $DS(\bm{u})$ follows directly from \cref{lem:linearization-state-equation}. To prove the limit \eqref{eq:generalized-dir-S-limit}, we now set $z:= DS(\bm{u})\bm{h}$ and $z_k := S'(\bm{u})\bm{h}_k$, $ k\geq 1$. Subtracting the equations for $z_k$ and $z$ (see \eqref{eq:1st-directional-deri-S} and \eqref{eq:generalized-deri-S}) yields
	\[
		\left\{
		\begin{aligned}
			-\Delta (z_k - z) + e^{S(\bm{u})}(z_k - z) &= - \sum_{i=k+1}^\infty h_i \delta_{x_i} && \text{in } \Omega,\\
			z_k - z & = 0 && \text{on } \Gamma.
		\end{aligned}
		\right.
	\]
	As a consequence of \eqref{eq:linearization-state-equation-a-priori-esti}, there holds
	\[
		\norm{z_k - z}_{W^{1,q}_0(\Omega)} \leq C \sum_{i=k+1}^\infty |h_i|
	\]
	for all $1 \leq q < 2$ and $k \geq 1$. Letting $k \to \infty$ gives \eqref{eq:generalized-dir-S-limit}.
\end{proof}

\subsection{Directional differentiability of the objective functional}
\label{sec:cost-func}
In this subsection, we investigate the first- and second-order  directional differentiability of the reduced cost function $J$ of \eqref{eq:P} along directions in $\ell^1_k$, $k \geq 1$. 
This concept will be used to introduce the generalized differentiability of $J$, which will be used in the next section to derive the first- and second-order optimality conditions for the optimal control problem \eqref{eq:P}.

Owing to the finite-dimensional directional differentiability of $S$ and the chain rule, the reduced objective functional is directionally differentiable in directions belonging to $\ell^1_k$ for all $k \geq 1$, as part of the following proposition.
\begin{proposition}
	\label{prop:cost-func-1st-2nd-directional-der}
	Assume that $\bm{u} \in \mathcal{D}$ such that $e^{S(\bm{u})} \in L^r(\Omega)$ for some $r > 1$. Let $k > 1$ be an integer.
	Then the reduced cost functional $J$ is directionally differentiable at $\bm{u}$ in every direction $\bm{h} \in \ell^1_k$ and satisfies
	\begin{equation}
		\label{eq:cost-func-1st-directional-der}
		J'(\bm{u})\bm{h} = \int_\Omega (S(\bm{u}) - y_d)z_{\bm{u}; \bm{h}} dx + \nu \sum_{i=1}^k u_i h_i 
		 = \sum_{i=1}^k \varphi_{\bm{u}}(x_i) h_i + \nu \sum_{i=1}^k u_i h_i
	\end{equation}
	with $z_{\bm{u}; \bm{h}} := S'(\bm{u})(\bm{h})$ and $\varphi_{\bm{u}}$ being the unique solution  in $W^{1,s_*}_0(\Omega)$ 
	for some $s_* :=  s_*(r) >2$	to 
	\begin{equation}
		\label{eq:adjoint-state0}
		\left\{
		\begin{aligned}
			-\Delta \varphi_{\bm{u}} + e^{S(\bm{u})} \varphi_{\bm{u}}& = S(\bm{u}) - y_d && \text{in } \Omega \\
			\varphi_{\bm{u}} &= 0 && \text{on } \Gamma.
		\end{aligned}
		\right.
	\end{equation}
	Moreover, there holds that
	\begin{equation}
		\label{eq:cost-func-2nd-directional-der-limit}
		\frac{1}{\rho^2} \left[J(\bm{u} + \rho \bm{h}) - J(\bm{u} ) - \rho J'(\bm{u})\bm{h} - \frac{\rho^2}{2} J''(\bm{u})\bm{h}^2   \right] \to 0 \quad \text{as } \rho \to 0^+,
	\end{equation}
	where $J''(\bm{u})\bm{h}^2$ is defined by
	\begin{equation} 
		\label{eq:cost-func-2nd-directional-der-def}
		J''(\bm{u})\bm{h}^2 
		  := \int_{\Omega} \left[1 - e^{S(\bm{u})} \varphi_{\bm{u}} \right] z_{\bm{u}; \bm{h}}^2 dx 	 + \nu \sum_{i=1}^k h_i^2.
	\end{equation}
\end{proposition}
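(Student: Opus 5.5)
Throughout, write $y := S(\bm{u})$, $y_\rho := S(\bm{u}+\rho\bm{h})$, $z_\rho := (y_\rho - y)/\rho$ and $z := z_{\bm{u};\bm{h}} = S'(\bm{u})\bm{h}$, where $\bm{h} \in \ell^1_k$ and $\rho>0$ is small. The plan is to expand $J$ exactly along $\rho \mapsto \bm{u}+\rho\bm{h}$ and then control the remainders with \cref{thm:directional-diff-control2state}.

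\textbf{Adjoint state.} First I justify the adjoint equation \eqref{eq:adjoint-state0}. The coefficient $e^{y}$ lies in $L^r(\Omega)$ and is nonnegative. The right-hand side $y - y_d$ lies in $L^2(\Omega)$, since $y \in W^{1,q}_0(\Omega) \hookrightarrow L^s(\Omega)$ for every $s<\infty$. Lax--Milgram in $H^1_0(\Omega)$ gives a unique $\varphi_{\bm{u}} \in H^1_0(\Omega)$. For the higher regularity, rewrite the equation as $-\Delta\varphi_{\bm{u}} = (y-y_d) - e^{y}\varphi_{\bm{u}}$. Its right-hand side lies in $L^{t}(\Omega)$ for some $t = t(r) > 1$, because $\varphi_{\bm{u}} \in L^s(\Omega)$ for all $s<\infty$. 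Since $W^{-1,s_*}$ contains $L^t$ in two dimensions for suitable $s_*>2$, Gr\"oger/Meyers-type $W^{1,s_*}_0$ regularity for the Laplacian on Lipschitz domains applies. This yields $\varphi_{\bm{u}} \in W^{1,s_*}_0(\Omega) \hookrightarrow C(\overline\Omega)$, so the point values $\varphi_{\bm{u}}(x_i)$ make sense.

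\textbf{First derivative and duality.} The key identity is the duality pairing $\int_\Omega (y-y_d) w\,dx = \sum_{i=1}^k \varphi_{\bm{u}}(x_i)\,\mu_i + \int_\Omega \varphi_{\bm{u}} g\, dx$ whenever $-\Delta w + e^{y} w = g + \sum_{i=1}^k \mu_i\delta_{x_i}$. I would prove it by testing the $w$-equation with $\varphi_{\bm{u}}$ and the adjoint equation with $w$. This is legitimate because $\varphi_{\bm{u}} \in W^{1,q'}_0\cap C(\overline\Omega)$ with $q' = s_*$ dual to some $q<2$, and because $e^y w\varphi_{\bm{u}} \in L^1$. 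Taking $w=z$ and $g=0$, together with the expansion
\[
J(\bm{u}+\rho\bm{h})-J(\bm{u}) = \rho\int_\Omega (y-y_d) z_\rho\,dx + \frac{\rho^2}{2}\int_\Omega z_\rho^2\,dx + \nu\rho\sum_{i=1}^k u_ih_i + \frac{\nu\rho^2}{2}\sum_{i=1}^k h_i^2,
\]
and $z_\rho \to z$ in $L^2(\Omega)$ from \eqref{eq:zrho-z-limit}, this gives \eqref{eq:cost-func-1st-directional-der} in both of its forms.

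\textbf{Second-order expansion.} I apply the duality identity to $w = z_\rho - z$. Subtracting \eqref{eq:1st-directional-deri-S} from \eqref{eq:zrho-def} shows that $-\Delta(z_\rho-z)+e^{y}(z_\rho-z) = -e^{y}\rho^{-1}\bigl(e^{\rho z_\rho}-1-\rho z_\rho\bigr)$, with no Dirac part. Hence
\[
\int_\Omega (y-y_d)(z_\rho - z)\,dx = -\frac{1}{\rho}\int_\Omega e^{y}\varphi_{\bm{u}}\bigl(e^{\rho z_\rho}-1-\rho z_\rho\bigr)dx .
\]
Substituting this into the expansion above, the quantity in \eqref{eq:cost-func-2nd-directional-der-limit} equals
\[
-\int_\Omega e^{y}\varphi_{\bm{u}}\Bigl[\frac{e^{\rho z_\rho}-1-\rho z_\rho}{\rho^2}-\frac{z^2}{2}\Bigr]dx + \frac12\int_\Omega (z_\rho^2 - z^2)\,dx .
\]
The second integral tends to $0$ by \eqref{eq:zrho-z-limit}.

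\textbf{Main obstacle.} The main step is the first integral. I split its bracket as $\rho^{-2}\bigl(e^{\rho z_\rho}-1-\rho z_\rho-\tfrac12\rho^2 z_\rho^2\bigr) + \tfrac12(z_\rho^2-z^2)$. The weight $e^{y}\varphi_{\bm{u}}$ is only in $L^r(\Omega)$, because $\varphi_{\bm{u}}$ is bounded. So I use H\"older with exponent $r' = r/(r-1)$. The first piece tends to $0$ in $L^{r'}(\Omega)$ by \eqref{eq:expS-2st-Taylor} with $s=r'$. The second tends to $0$ in $L^{r'}(\Omega)$ by \eqref{eq:zrho-z-limit} with $s = 2r'$. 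This proves \eqref{eq:cost-func-2nd-directional-der-limit} with $J''(\bm{u})\bm{h}^2$ given by \eqref{eq:cost-func-2nd-directional-der-def}. The delicate point is the rigorous justification of the duality pairing, since $z$ is not in $H^1$; I would handle it via the $W^{1,s_*}$ regularity of $\varphi_{\bm{u}}$, or by approximating the Dirac measures with mollifiers.
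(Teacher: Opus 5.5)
Your proposal is correct and takes essentially the same route as the paper. You test the equation for $z_\rho - z$ against $\varphi_{\bm{u}}$ to obtain \eqref{eq:zrho-z-varphi}, expand the remainder exactly, split the bracket into the second-order Taylor remainder plus $\tfrac12(z_\rho^2 - z^2)$, and conclude with \eqref{eq:zrho-z-limit} and \eqref{eq:expS-2st-Taylor}. The differences are only added detail: you prove existence and $W^{1,s_*}_0$-regularity of the adjoint via Lax--Milgram and Gr\"oger-type regularity, where the paper cites \cite{Nhu2025}, and you spell out the duality justification and the H\"older step with $r' = r/(r-1)$, which the paper leaves implicit.
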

\begin{proof}
	The existence and uniqueness of solutions in $W^{1,s_*}_0(\Omega)$ 
	for some $s_* :=  s_*(r) >2$ to \eqref{eq:adjoint-state0} results from the proof of \cite[Thm.~5.1]{Nhu2025}.
	The directional differentiability of $J$ as well as  the first identity in \eqref{eq:cost-func-1st-directional-der} follows from \cref{thm:directional-diff-control2state} and the chain rule. For the second identity in \eqref{eq:cost-func-1st-directional-der}, testing the equation for $z_{\bm{u}; \bm{h}}$ (see \eqref{eq:1st-directional-deri-S}) and \eqref{eq:adjoint-state0} by $\varphi_{\bm{u}}$ and  $z_{\bm{u}; \bm{h}}$, respectively, yields
	\begin{align*}
		& \int_\Omega \nabla \varphi_{\bm{u}} \cdot \nabla z_{\bm{u}; \bm{h}} + e^{S(\bm{u})} \varphi_{\bm{u}} z_{\bm{u}; \bm{h}} dx = \sum_{i=1}^k \varphi_{\bm{u}}(x_i) h_i 
		\intertext{and}
		& \int_\Omega \nabla \varphi_{\bm{u}} \cdot \nabla z_{\bm{u}; \bm{h}} + e^{S(\bm{u})} \varphi_{\bm{u}} z_{\bm{u}; \bm{h}} dx = \int_{\Omega} (S(\bm{u}) - y_d)z_{\bm{u}; \bm{h}} dx.
	\end{align*}
	The last two equations show the second identity in \eqref{eq:cost-func-1st-directional-der}.

	It remains to verify \eqref{eq:cost-func-2nd-directional-der-limit}. To achieve this, for any $\rho >0$, we set for simplicity 
	\[
		y := S(\bm{u}), \quad y_\rho := S(\bm{u} + \rho \bm{h}), \quad z_\rho := \frac{1}{\rho} (S(\bm{u} + \rho \bm{u}) - S(\bm{u})), \quad z := z_{\bm{u}; \bm{h}} = S'(\bm{u})(\bm{h}). 
	\]
	Obviously, by subtracting the equations for $y_\rho$ and $y$, and then dividing the obtained result via $\rho$, we conclude that   $z_\rho$ fulfills the equation
	\[
		\left\{
		\begin{aligned}
			-\Delta z_\rho  + e^{y} \frac{1}{\rho} (e^{\rho z_\rho} -1) & = \sum_{i=1}^k h_i \delta_{x_i} && \text{in } \Omega \\
			z_\rho &= 0 && \text{on } \Gamma.
		\end{aligned}
		\right.
	\]
	This, along with \eqref{eq:1st-directional-deri-S}, gives
	\[
		\left\{
		\begin{aligned}
			-\Delta (z_\rho - z)  + e^{y} \left[ \frac{1}{\rho} (e^{\rho z_\rho} -1) - z \right]  & =0 && \text{in } \Omega \\
			(z_\rho - z) &= 0 && \text{on } \Gamma,
		\end{aligned}
		\right.
	\]
	or, equivalently,
	\begin{equation}
		\label{eq:zrho-z}
		\left\{
		\begin{aligned}
			-\Delta (z_\rho - z)  + e^{y}(z_\rho - z) & = -e^y  \frac{1}{\rho} (e^{\rho z_\rho} -1 - \rho z_\rho)    && \text{in } \Omega \\
			(z_\rho - z) &= 0 && \text{on } \Gamma.
		\end{aligned}
		\right.
	\end{equation}
	Testing now \eqref{eq:zrho-z} and \eqref{eq:adjoint-state0}, respectively, by $\varphi_{\bm{u}}$ and $(z_\rho - z)$, and thus subtracting the obtained identities, we have
	\begin{equation}
		\label{eq:zrho-z-varphi}
		\int_\Omega (y - y_d)(z_\rho - z)dx = -\frac{1}{\rho} \int_\Omega e^y \varphi_{\bm{u}}  (e^{\rho z_\rho} -1 - \rho z_\rho)dx.
	\end{equation}
	On the other hand, by setting
	\[
		M_\rho := J(\bm{u} + \rho \bm{h}) - J(\bm{u} ) - \rho J'(\bm{u})\bm{h} - \frac{\rho^2}{2} J''(\bm{u})\bm{h}^2
	\]
	and using the identity 
	\[
		\norm{a}_{L^2(\Omega)}^2 - \norm{b}_{L^2(\Omega)}^2 = \norm{a -b}_{L^2(\Omega)}^2 +  2 \int_\Omega (a-b)b dx \quad \text{for all } a, b \in L^2(\Omega)
	\]
	together with the first identity in \eqref{eq:cost-func-1st-directional-der} and the definition in \eqref{eq:cost-func-2nd-directional-der-def}, we derive
	\begin{multline*}
		M_\rho  = \frac{1}{2}\left[\norm{y_\rho - y}_{L^2(\Omega)}^2 + 2 \int_\Omega (y_\rho - y)(y - y_d)dx  \right] \\
		\begin{aligned}
			& \qquad \qquad - \rho \int_\Omega (y - y_d)z dx  - \frac{1}{2} \rho^2\int_{\Omega} \left[1 - e^{y} \varphi_{\bm{u}} \right] z^2 dx  \\
			& = \frac{1}{2} \rho^2 \left[ \norm{ z_\rho}_{L^2(\Omega)}^2  -  \norm{ z}_{L^2(\Omega)}^2 \right] + \rho \int_\Omega (y - y_d)( z_\rho - z ) dx  + \frac{1}{2} \rho^2\int_{\Omega}  e^{y} \varphi_{\bm{u}}z^2 dx.
		\end{aligned}		 
	\end{multline*}
	Combining this with \eqref{eq:zrho-z-varphi} yields
	\begin{multline*}
		M_\rho =  \frac{1}{2} \rho^2 \left[ \norm{ z_\rho}_{L^2(\Omega)}^2  -  \norm{ z}_{L^2(\Omega)}^2 \right] 
		- \int_{\Omega}  e^{y} \varphi_{\bm{u}}\left[  e^{\rho z_\rho} -1 - \rho z_\rho - \frac{1}{2} \rho^2 z^2 \right] dx\\
		\begin{aligned}
			& 	= \frac{1}{2} \rho^2 \left[ \norm{ z_\rho}_{L^2(\Omega)}^2  -  \norm{ z}_{L^2(\Omega)}^2 \right] \\
			& \qquad
			- \int_{\Omega}  e^{y} \varphi_{\bm{u}}\left[  \left(e^{\rho z_\rho} -1 - \rho z_\rho - \frac{1}{2}\rho^2 z_\rho^2\right) + \frac{1}{2}\rho^2 \left(z_\rho^2 - z^2\right) \right] dx.
		\end{aligned}
	\end{multline*}
	From this, \eqref{eq:zrho-z-limit}, and \eqref{eq:expS-2st-Taylor}, we obtain \eqref{eq:cost-func-2nd-directional-der-limit}.
\end{proof}


\begin{definition}
	\label{def:generalized-deri-cost-func}
	For any $\bm{u} = (u_i) \in \mathcal{D}$ with $e^{S(\bm{u})} \in L^r(\Omega)$ for some $r > 1$, we define  \emph{first-order generalized derivative} and  \emph{second-order generalized derivative} of $J$ at $\bm{u}$ as the mappings
	\[
		DJ(\bm{u}) : \ell^1 \to \R \quad \text{and} \quad D^2J(\bm{u}): \ell^1 \times \ell^1 \to \R,
	\]
	respectively, defined via
	\begin{equation}
		\label{eq:generalized-deri-J-1st}
		DJ(\bm{u})\bm{h} := \int_\Omega (S(\bm{u}) - y_d) DS(\bm{u})\bm{h} dx + \nu \sum_{i=1}^\infty u_i h_i 
		= \sum_{i=1}^\infty \varphi_{\bm{u}}(x_i) h_i + \nu \sum_{i=1}^\infty u_i h_i
	\end{equation}
	for all $\bm{h} = (h_i) \in \ell^1$ 
	and
	\begin{equation}
		\label{eq:generalized-deri-J-2nd}
		D^2J(\bm{u})[\bm{h},\bm{k}] := \int_{\Omega} \left[1 - e^{S(\bm{u})} \varphi_{\bm{u}} \right] (DS(\bm{u})\bm{h}) (DS(\bm{u})\bm{k}) dx 	 + \nu \sum_{i=1}^\infty h_i k_i
	\end{equation}
	for all $\bm{h} = (h_i), \bm{k} = (k_i) \in \ell^1$. Here $\varphi_{\bm{u}}$ uniquely satisfies \eqref{eq:adjoint-state0}.
\end{definition}
\begin{proposition}
	\label{prop:generalized-deri-cost-func-limits}
	Let $\bm{u} = (u_i) \in \mathcal{D}$ be such that $e^{S(\bm{u})} \in L^r(\Omega)$ for some $r > 1$. Then, there hold for any $\bm{h} \in \ell^1$ that
	\begin{align}
		& J'(\bm{u})\bm{h}_k \to DJ(\bm{u})\bm{h} \label{eq:1st-deri-J-limit} 
		\intertext{and that}
		& J''(\bm{u})\bm{h}_k^2 \to D^2J(\bm{u})[\bm{h}, \bm{h} ]\label{eq:2nd-deri-J-limit} 
	\end{align}
	as $k \to \infty$.
\end{proposition}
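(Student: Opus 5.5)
The plan is to reduce both limits to the convergence $z_k := S'(\bm{u})\bm{h}_k \to z := DS(\bm{u})\bm{h}$ from \cref{prop:generalized-dir-S}, combined with integrability of the coefficients appearing in $J'$ and $J''$. We first observe that for $\bm{h}_k \in \ell^1_k$, \cref{thm:directional-diff-control2state} identifies $S'(\bm{u})\bm{h}_k$ with the solution of \eqref{eq:generalized-deri-S} driven by $\bm{h}_k$, i.e. $S'(\bm{u})\bm{h}_k = DS(\bm{u})\bm{h}_k$. Consequently \eqref{eq:cost-func-1st-directional-der} and \eqref{eq:cost-func-2nd-directional-der-def} read
\[
J'(\bm{u})\bm{h}_k = \int_\Omega (S(\bm{u})-y_d) z_k\,dx + \nu\sum_{i=1}^k u_ih_i, \qquad J''(\bm{u})\bm{h}_k^2 = \int_\Omega \bigl[1-e^{S(\bm{u})}\varphi_{\bm{u}}\bigr] z_k^2\,dx + \nu\sum_{i=1}^k h_i^2 .
\]

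For \eqref{eq:1st-deri-J-limit}, the sequence part is immediate: $\sum_{i\le k} u_ih_i \to \sum_i u_ih_i$, since $\bm{u},\bm{h}\in\ell^1$ are both bounded and summable. For the integral, we use that $S(\bm{u})-y_d\in L^2(\Omega)$; indeed $S(\bm{u})\in W^{1,q}_0(\Omega)\hookrightarrow L^2(\Omega)$ for $q$ close to $2$. Moreover $z_k\to z$ in $W^{1,q}_0(\Omega)$, hence in $L^s(\Omega)$ for every finite $s$ by Sobolev embedding in 2D. Taking $s=2$ gives convergence of the integral. As an alternative route, the second expression $\sum_{i\le k}\varphi_{\bm{u}}(x_i)h_i$ converges directly because $\varphi_{\bm{u}}\in W^{1,s_*}_0(\Omega)\subset C(\overline\Omega)$ with $s_*>2$, so $\varphi_{\bm{u}}$ is bounded. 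The consistency of the two expressions in \eqref{eq:generalized-deri-J-1st} follows from the same testing argument as in \cref{prop:cost-func-1st-2nd-directional-der}, now using a right-hand side with infinitely many Diracs, which is legitimate since $\varphi_{\bm{u}}$ is continuous.

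For \eqref{eq:2nd-deri-J-limit}, we again have $\nu\sum_{i\le k}h_i^2\to\nu\sum_i h_i^2$, which is finite because $\ell^1\subset\ell^2$. The main point is the weight $w := 1-e^{S(\bm{u})}\varphi_{\bm{u}}$. Since $\varphi_{\bm{u}}\in L^\infty(\Omega)$ and $e^{S(\bm{u})}\in L^r(\Omega)$, we get $w\in L^r(\Omega)$. Let $r'$ be the conjugate exponent. Then
\[
\Bigl|\int_\Omega w(z_k^2-z^2)\,dx\Bigr| \le \|w\|_{L^r}\,\|z_k-z\|_{L^{2r'}}\,\|z_k+z\|_{L^{2r'}} .
\]
Both norms on the right are controlled through $W^{1,q}_0(\Omega)\hookrightarrow L^{2r'}(\Omega)$ for $q<2$ chosen close enough to $2$. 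By \cref{prop:generalized-dir-S} the first factor tends to $0$ while the second stays bounded, so the integral converges.

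The only delicate step is justifying $\varphi_{\bm{u}}\in L^\infty(\Omega)$. Here we rely on the statement in \cref{prop:cost-func-1st-2nd-directional-der} that $\varphi_{\bm{u}}\in W^{1,s_*}_0(\Omega)$ with $s_*>2$, which embeds into $C(\overline\Omega)$ in two dimensions. Beyond that, everything is Hölder's inequality together with the embeddings.
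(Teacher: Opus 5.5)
Your proposal is correct and follows the paper's own route. You identify $S'(\bm{u})\bm{h}_k$ with $DS(\bm{u})\bm{h}_k$ and pass to the limit via \cref{prop:generalized-dir-S}. The weights are controlled through $\varphi_{\bm{u}} \in W^{1,s_*}_0(\Omega) \hookrightarrow C(\overline{\Omega})$, $e^{S(\bm{u})} \in L^r(\Omega)$, and $W^{1,q}_0(\Omega) \hookrightarrow L^{2r'}(\Omega)$ for $q$ close to $2$; you simply spell out the H\"older estimate and the convergence of the finite sums that the paper leaves implicit.
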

\begin{proof}
	The limits in \eqref{eq:1st-deri-J-limit} and \eqref{eq:2nd-deri-J-limit} follow from \eqref{eq:generalized-deri-J-1st}, \eqref{eq:generalized-deri-J-2nd}, \eqref{eq:cost-func-1st-directional-der}, \eqref{eq:cost-func-2nd-directional-der-def},  \eqref{eq:generalized-dir-S-limit}, and the continuous embeddings
	\[
		S(\bm{u}) \in W^{1,q}_0(\Omega), \quad  \varphi_{\bm{u}} \in W^{1,s_*}(\Omega) \hookrightarrow C(\overline{\Omega}), \quad W^{1,q}_0(\Omega) \hookrightarrow L^{2r'}(\Omega) \hookrightarrow L^2(\Omega)
	\]
	for all $q \in (1,2)$ sufficiently close to $2$, where $s_* >2$ and $r' := \frac{r}{r-1}$.
\end{proof}

\section{Optimal control problem {\eqref{eq:P}}}
\label{sec:optimal-control-prob}

In this section, we derive the fist- and second-order optimality conditions for a local minimizer of \eqref{eq:P}. 
We begin by establishing the existence of minimizers of \eqref{eq:P}.

\subsection{Existence of minimizers}
\label{sec:existence-minimizers}

The set of admissible controls for \eqref{eq:P} is defined as
\begin{equation}
	\label{eq:admissible-set}
	U_{ad} := \left\{ \bm{u} \in \ell^1 \mid \bm{\alpha} \leq \bm{u} \leq \bm{\beta} \right\}.
\end{equation}

The nonemptiness,  convexity, and compactness of $U_{ad}$ are shown in the following.
\begin{lemma}
	\label{lem:admissible-set}
	The feasible set $U_{ad}$ is nonempty, convex, and compact in $\ell^1$.
	Moreover, there exists a constant $\bar r := \bar r(\bm{\beta}) >1$ such that
	\begin{equation}
		\label{eq:expS-belong-Lr-Uad}
		e^{S(\bm{u})} \in L^{\bar r}(\Omega) \quad \text{for all }  \bm{u} \in U_{ad}.
	\end{equation}
\end{lemma}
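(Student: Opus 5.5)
Nonemptiness and convexity are immediate: $\bm{\alpha} \in U_{ad}$ because $\alpha_i \leq \beta_i$ for all $i$, and $U_{ad}$ is an intersection of the convex sets $\{\bm{u} : \alpha_i \le u_i \le \beta_i\}$. Note also that $U_{ad} \subset \mathcal{D}$, since $\beta_i < 4\pi$.

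For compactness, every $\bm{u} \in U_{ad}$ satisfies $|u_i| \le |\alpha_i| + |\beta_i| =: \gamma_i$, and $\bm\gamma \in \ell^1$. Since $U_{ad}$ is a subset of the complete space $\ell^1$, it suffices to show that $U_{ad}$ is closed and totally bounded.
\begin{itemize}
\item[(i)] \emph{Closedness.} $\ell^1$-convergence implies coordinatewise convergence, so the inequalities $\alpha_i \le u_i \le \beta_i$ are preserved in the limit.
\item[(ii)] \emph{Total boundedness.} Given $\varepsilon>0$, choose $k$ with $\sum_{i>k}\gamma_i<\varepsilon/2$. The set of truncations $\{\bm{u}_k : \bm{u}\in U_{ad}\}$ is a bounded subset of the finite-dimensional space $\ell^1_k$. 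It therefore has a finite $\varepsilon/2$-net, which is then an $\varepsilon$-net for $U_{ad}$, because $\norm{\bm{u}-\bm{u}_k}_{\ell^1} \le \sum_{i>k}\gamma_i$.
\end{itemize}
Equivalently, one could extract a diagonal subsequence converging coordinatewise and use dominated convergence with the dominant $\bm\gamma$.

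For \eqref{eq:expS-belong-Lr-Uad}, the idea is to apply \cref{lem:expS-esti} to the single control $\bm{\beta}$, whose hypotheses are exactly \ref{ass:bounds-admissible-set}. Condition \eqref{eq:omega-rho-ass} with $\bm\omega := \bm\beta$ is precisely \eqref{eq:beta-rho-ass}, and \eqref{eq:ui-less-4pi} holds since $\beta_i<4\pi$. Also $\bm{\beta} \in \mathcal{D}$. Hence there is $\bar r := r(\bm\beta) > 1$ with $e^{S(\bm\beta)} \in L^{\bar r}(\Omega)$. Every $\bm{u}\in U_{ad}$ satisfies $\bm{u}\in\mathcal{D}$ and $u_i\le\beta_i$ for all $i$, so \eqref{eq:exp-S-belong-Lr-order} gives $e^{S(\bm u)}\in L^{\bar r}(\Omega)$ with the same exponent.

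The only subtle point is that $\bar r$ must be uniform over $U_{ad}$. This uniformity comes from the monotonicity built into \eqref{eq:exp-S-belong-Lr-order}: by the comparison principle, $S(\bm u)\le S(\bm\beta)$, so $0 < e^{S(\bm u)} \le e^{S(\bm\beta)}$ a.e. One should also check that the case distinction in the proof of \cref{lem:expS-esti}, depending on whether $\bm\beta_{\max}\le 0$, causes no issue. If $\bm\beta_{\max}\le0$, any $\bar r>1$ works, for instance $\bar r=2$. Otherwise $\bar r$ is chosen in $(1,4\pi/\bm\beta_{\max})$, which is a nonempty interval since $\bm\beta_{\max}<4\pi$; this maximum is attained because $\beta_i\to0$.
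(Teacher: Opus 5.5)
Your proof is correct. For the $L^{\bar r}$ statement you do exactly what the paper does: apply \cref{lem:expS-esti} to the single control $\bm{\beta}$, using \eqref{eq:beta-rho-ass}, and then pass to all of $U_{ad}$ through the order property \eqref{eq:exp-S-belong-Lr-order}, which gives a uniform exponent $\bar r$.

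For compactness you take a different and more elementary route. Both arguments rest on the same domination $|u_i|\le|\alpha_i|+|\beta_i|$ and the uniformly small tails it produces. The paper feeds that estimate into the Dunford--Pettis theorem to get relative \emph{weak} compactness. It then upgrades to norm compactness via the Schur property of $\ell^1$, and finishes with closedness. You instead show directly that $U_{ad}$ is closed and totally bounded, by approximating with truncations in the finite-dimensional space $\ell^1_k$ (or, equivalently, by a diagonal subsequence plus dominated convergence). Your argument is self-contained and never leaves the norm topology. The paper's argument is shorter only because it cites these two theorems.

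Your handling of the sign cases is slightly more careful than the paper's proof of \cref{lem:expS-esti}. You note that $\bm{\beta}_{\max}$ is attained only when some $\beta_i>0$, and you treat $\sup_i\beta_i\le 0$ separately. The paper's proof asserts the maximum is attained for every $\bm{u}\in\ell^1$, which fails when all $u_i<0$, though this is harmless there.
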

\begin{proof}
	Clearly, $U_{ad}$ is nonempty, closed, and convex.
	To establish its compactness, we apply the Dunford--Pettis theorem in the context of the $\ell^1$ space and use the Schur property of $\ell^1$; see, e.g., \cite[Thms.~15.4, 15.5]{Voigt2020}, \cite{Megginson1998}, \cite[Thm.~4]{HancheOlsen2010}. 
	Indeed, for any $\bm{u} \in U_{ad}$, there holds
	\begin{equation}
		\label{eq:eta-i-esti}
		|u_i| \leq \max\{|\alpha_i|, |\beta_i| \} \leq |\alpha_i| + |\beta_i| \quad \text{for all } i \geq 1.
	\end{equation}
	Consequently, one has
	\[
		\sum_{i=1}^\infty |u_i| \leq \norm{\alpha}_{\ell^1} + \norm{\beta}_{\ell^1}.
	\]
	Since $\bm{u}$ is arbitrary in $U_{ad}$, $U_{ad}$ is pointwise bounded. Moreover, let $\epsilon >0$ be arbitrary, but fixed.
	Since $\bm{\alpha}, \bm{\beta} \in \ell^1$,  there exists an integer $n_0 >0$ such that
	\[
		\sum_{i=n_0}^\infty |\alpha_i| + |\beta_i| < \epsilon.
	\]
	Combining this with \eqref{eq:eta-i-esti} yields
	\[
		\sup\left\{ \sum_{i=n_0}^\infty |u_i| \middle | \bm{u} \in U_{ad} \right\} \leq \sum_{i=n_0}^\infty |\alpha_i| + |\beta_i| < \epsilon.
	\]
	This shows the equi-integrability of $U_{ad}$. By the Dunford--Pettis theorem, it follows that  $U_{ad}$ is relatively compact in the weak topology. On the other hand, since $\ell^1$ has Schur's property--i.e., weak convergence implies norm convergence--$U_{ad}$	is also relatively compact in norm. Combining this with the closedness of $U_{ad}$, we conclude that $U_{ad}$ 	is compact.

	\medskip
	
	It remains to prove \eqref{eq:expS-belong-Lr-Uad}. To this end, we apply \Cref{lem:expS-esti} to $\bm{u} := \bm{\beta}$ and use the assumption \eqref{eq:beta-rho-ass} to conclude that a constant $\bar r := \bar r(\bm{\beta}) >1$ exists and satisfies $e^{S(\bm{\beta})} \in L^{\bar r}(\Omega)$. Combining this with \eqref{eq:exp-S-belong-Lr} and \eqref{eq:exp-S-belong-Lr-order}, we arrive at \eqref{eq:expS-belong-Lr-Uad}.
\end{proof}

The compactness of the feasible set $U_{ad}$, together with the continuity of the control-to-state operator $S$ established in \cref{lem:control2state-oper-continuity}, ensures the existence of minimizers to \eqref{eq:P}, as shown in the next proposition. 
\begin{proposition}
	\label{prop:existence-minimizer}
	Problem \eqref{eq:P} admits at least one minimizer $\bm{\bar u} \in U_{ad}$. Moreover, any minimizer $\bm{\bar u}$ to \eqref{eq:P} satisfies
	\begin{equation}
		\label{eq:ybar-Lr}
		e^{S(\bm{\bar u})} \in L^{\bar r}(\Omega),
	\end{equation}	
	where $\bar r >1$ is defined as in \Cref{lem:admissible-set}.
\end{proposition}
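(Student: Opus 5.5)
The plan is the direct method of the calculus of variations, using that $U_{ad}$ is compact in $\ell^1$ (\cref{lem:admissible-set}). Since $\bm{\beta} \in \ell^1$ and $\beta_i < 4\pi$, we have $U_{ad} \subset \mathcal{D}$. Hence $S$ and therefore $J$ are well defined on $U_{ad}$, and $J \geq 0$. Set $j := \inf_{\bm{u} \in U_{ad}} J(\bm{u}) \in [0,\infty)$ and choose a minimizing sequence $\{\bm{u}^n\} \subset U_{ad}$ with $J(\bm{u}^n) \to j$. By compactness of $U_{ad}$ in $\ell^1$, we may pass to a subsequence, still denoted $\{\bm{u}^n\}$, with $\bm{u}^n \to \bm{\bar u}$ in $\ell^1$ for some $\bm{\bar u} \in U_{ad}$.

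Next I will show that $J$ is continuous along this sequence; in fact it is continuous on $U_{ad}$ with respect to the $\ell^1$-norm. By \eqref{eq:S-continuity-W1q} in \cref{lem:control2state-oper-continuity}, $S(\bm{u}^n) \to S(\bm{\bar u})$ in $W^{1,q}_0(\Omega)$ for every $q \in [1,2)$. Taking $q$ close to $2$ (for instance $q > 1$ with $2q/(2-q) \geq 2$), the Sobolev embedding $W^{1,q}_0(\Omega) \hookrightarrow L^2(\Omega)$ in two dimensions gives $y_{\bm{u}^n} \to y_{\bm{\bar u}}$ in $L^2(\Omega)$. Hence the tracking term converges, because $y_d \in L^2(\Omega)$. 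For the Tikhonov term, note that $\ell^1 \hookrightarrow \ell^2$ continuously, since $\|\bm{v}\|_{\ell^2} \leq \|\bm{v}\|_{\ell^1}$. Therefore $\sum_i (u^n_i)^2 \to \sum_i \bar u_i^2$. Together these give $J(\bm{u}^n) \to J(\bm{\bar u})$, so $J(\bm{\bar u}) = j$ and $\bm{\bar u}$ is a minimizer.

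The regularity \eqref{eq:ybar-Lr} holds for every element of $U_{ad}$ and hence for every minimizer. It is exactly \eqref{eq:expS-belong-Lr-Uad} from \cref{lem:admissible-set}, with the same exponent $\bar r = \bar r(\bm{\beta})$. That lemma was obtained by combining \cref{lem:expS-esti} at $\bm{u} = \bm{\beta}$, where \eqref{eq:beta-rho-ass} gives \eqref{eq:omega-rho-ass}, with the comparison statement \eqref{eq:exp-S-belong-Lr-order}. So no additional work is required beyond citing it.

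I do not expect a genuine obstacle here. The only points requiring care are the bookkeeping that $U_{ad} \subset \mathcal{D}$, so that $S$ is defined along the whole minimizing sequence, and the choice of $q$ close enough to $2$ for the $L^2$ embedding. The main analytical difficulty, compactness of $U_{ad}$ in the non-reflexive space $\ell^1$, has already been settled in \cref{lem:admissible-set} via Dunford--Pettis and the Schur property.
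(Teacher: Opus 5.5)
Your proposal is correct and follows essentially the same route as the paper's proof. The paper likewise takes a minimizing sequence and extracts an $\ell^1$-convergent subsequence via the compactness of $U_{ad}$ from \cref{lem:admissible-set}. It passes to the limit in $J$ using \eqref{eq:S-continuity-W1q} and $W^{1,q}_0(\Omega)\hookrightarrow L^2(\Omega)$, and it reads off \eqref{eq:ybar-Lr} directly from \eqref{eq:expS-belong-Lr-Uad}. Your explicit checks of $U_{ad}\subset\mathcal{D}$ and of the continuity of the Tikhonov term via $\ell^1\hookrightarrow\ell^2$ are small details the paper leaves implicit, and they are correct.
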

\begin{proof}
	Since $\bm{\beta} \in U_{ad}$, there holds
	\[
		0 \leq \gamma := \inf \left\{ J(\bm{u}) \middle | \bm{u} \in U_{ad} \right\} \leq J(\bm{\beta}) = \frac{1}{2} \norm{y_{\bm\beta} - y_d}_{L^2(\Omega)}^2 + \nu \sum_{i=1}^\infty \beta_i^2
	\]
	with $y_{\bm\beta} := S(\bm{\beta}) \in W^{1,q}_0(\Omega) \hookrightarrow L^2(\Omega)$ for all $q \in [1,2)$.  There thus exists a sequence $\{ \bm{u}_n \} \subset U_{ad}$ such that
	\begin{equation}
		\label{eq:minimizing-sequence}
		\gamma = \lim \limits_{n \to \infty} J(\bm{u}_n).
	\end{equation}
	Moreover, in view of \cref{lem:admissible-set}, $U_{ad}$ is compact in $\ell^1$. A subsequence, denoted in the same way, of $\{ \bm{u}_n \}$ exists and converges to an element $\bm{\bar u} \in U_{ad}$. From this and \eqref{eq:S-continuity-W1q}, along with the continuous embedding $W^{1,q}_0(\Omega) \hookrightarrow L^2(\Omega)$ for all $q \in [1,2)$, we deduce that
	\[
		J(\bm{u}_n) \to J(\bm{\bar u}) \quad \text{as } n \to \infty,
	\]
	which, in combination with \eqref{eq:minimizing-sequence}, yields $\gamma = J(\bm{\bar u})$. Hence, $\bm{\bar u}$ is a minimizer of \eqref{eq:P}.
	Clearly, any minimizer  of \eqref{eq:P} satisfies \eqref{eq:ybar-Lr} thanks to \eqref{eq:expS-belong-Lr-Uad}.
\end{proof}

\subsection{First- and second-order necessary optimality conditions}
\label{sec:NOCs}
To derive the first-order necessary optimality conditions for \eqref{eq:P}, we first study the $k$-dimensional directions, $k \ge 1$, in the tangent cone to the admissible set $U_{ad}$ at a local minimizer $\bm{\bar u}$ (whose existence is guaranteed by \cref{prop:existence-minimizer}) and then pass to the limit as $k \to \infty$. Similarly, by considering $k$-dimensional \emph{critical} directions and passing to the limit, we obtain the desired second-order necessary optimality conditions for \eqref{eq:P}.

\medskip

We first present a criterion in terms of tangent directions to the admissible set at a point.
\begin{lemma}
	\label{lem:direction-approximations}
	Assume that $\bm{h} = (h_i) \in \ell^1$. Then, 
	$\bm{h} \in \overline{\mathrm{cone}}(U_{ad} - \bm{\bar u})$ if and only if there holds
		\begin{equation} 
			\label{eq:tangent-direction-equivalence}
			h_i 
			\left\{
			\begin{aligned}
				& \geq 0 && \text{if } \bar u_i = \alpha_i,\\
				& \leq 0 && \text{if } \bar u_i = \beta_i
			\end{aligned}
			\right. \quad \text{for all } i \geq 1.
		\end{equation} 
	Moreover, if $\bm{h} \in \ell^1$ fulfills \eqref{eq:tangent-direction-equivalence}, then   $\bm{h}_k  \in \mathrm{cone}(U_{ad} - \bm{\bar u})$.
\end{lemma}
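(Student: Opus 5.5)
We prove the two implications separately, and establish the final claim about truncations first, since it supplies the hard direction.

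\emph{Necessity.} Suppose $\bm{h} \in \overline{\mathrm{cone}}(U_{ad} - \bm{\bar u})$. Choose $\bm{h}^n = t_n(\bm{v}^n - \bm{\bar u})$ with $t_n \geq 0$ and $\bm{v}^n \in U_{ad}$ such that $\bm{h}^n \to \bm{h}$ in $\ell^1$.
\begin{itemize}[leftmargin=*]
\item If $\bar u_i = \alpha_i$, then $h^n_i = t_n(v^n_i - \alpha_i) \geq 0$ for every $n$.
\item If $\bar u_i = \beta_i$, then $h^n_i \leq 0$ for every $n$.
\end{itemize}
Convergence in $\ell^1$ implies coordinatewise convergence, so these signs pass to the limit and \eqref{eq:tangent-direction-equivalence} holds. (When $\alpha_i = \beta_i$, both conditions force $h_i = 0$, which is consistent.)

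\emph{Truncation claim.} Let $\bm{h}$ satisfy \eqref{eq:tangent-direction-equivalence} and fix $k$. Consider the finitely many indices $i \leq k$ with $h_i \neq 0$.
\begin{itemize}[leftmargin=*]
\item If $h_i > 0$, the sign condition gives $\bar u_i \neq \beta_i$, hence $\bar u_i < \beta_i$.
\item If $h_i < 0$, it gives $\bar u_i > \alpha_i$.
\end{itemize}
So for each such $i$ there is $t_i > 0$ with $\alpha_i \leq \bar u_i + t h_i \leq \beta_i$ for all $t \in [0, t_i]$; explicitly, $t_i = (\beta_i - \bar u_i)/h_i$ if $h_i > 0$ and $t_i = (\alpha_i - \bar u_i)/h_i$ if $h_i < 0$. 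Set $t := \min_i t_i > 0$. This minimum is positive because only finitely many indices are involved.

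Then $\bm{v} := \bm{\bar u} + t\bm{h}_k$ agrees with $\bm{\bar u}$ for $i > k$ and lies in the box for $i \leq k$. Hence $\bm{v} \in U_{ad}$, and $\bm{h}_k = t^{-1}(\bm{v} - \bm{\bar u}) \in \mathrm{cone}(U_{ad} - \bm{\bar u})$. If $\bm{h}_k = \bm{0}$, the claim is trivial.

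\emph{Sufficiency.} Let $\bm{h}$ satisfy \eqref{eq:tangent-direction-equivalence}. By the truncation claim, $\bm{h}_k \in \mathrm{cone}(U_{ad} - \bm{\bar u})$ for every $k$. Moreover,
\[
\norm{\bm{h}_k - \bm{h}}_{\ell^1} = \sum_{i > k} |h_i| \to 0,
\]
so $\bm{h}$ lies in the closure $\overline{\mathrm{cone}}(U_{ad} - \bm{\bar u})$.

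\emph{Where the difficulty lies.} The only delicate point is that the step $t$ must be uniform over the active coordinates. For the full direction $\bm{h}$, the quotients $(\beta_i - \bar u_i)/h_i$ can tend to $0$ as $i \to \infty$, so $\bm{h}$ itself need not belong to the cone. Truncating to finitely many coordinates is exactly what makes the minimum $t$ strictly positive.
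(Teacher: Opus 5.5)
Your proof is correct and follows the same route as the paper. Necessity comes from passing the coordinatewise sign conditions to the $\ell^1$-limit. Sufficiency comes from placing each truncation $\bm{h}_k$ in $\mathrm{cone}(U_{ad}-\bm{\bar u})$ with a step size that works for all of the finitely many coordinates $i\le k$, and then using $\bm{h}_k\to\bm{h}$ in $\ell^1$. The only difference is that you spell out what the paper leaves implicit: the coordinatewise argument for necessity and the explicit step $t=\min_i t_i$.
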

\begin{proof}
	Obviously, any $\bm{h}$ in $\overline{\mathrm{cone}}(U_{ad} - \bm{\bar u})$ satisfies \eqref{eq:tangent-direction-equivalence}. Conversely, let $\bm{h} = (h_i) \in \ell^1$ satisfy \eqref{eq:tangent-direction-equivalence}. 
	To prove $\bm{h} \in \overline{\mathrm{cone}}(U_{ad} - \bm{\bar u})$, it suffices to show that $\bm{h}_k  \in \mathrm{cone}(U_{ad} - \bm{\bar u})$ for all $k \geq 1$, thanks to the fact that $\bm{h}_k \to \bm{h}$ in $\ell^1$ as $k \to \infty$. To this end, we first deduce from \eqref{eq:tangent-direction-equivalence} and the assumption $\beta_i \geq \alpha_i$ for all $1 \leq i \leq k$ that there exists a constant $t_k >0$ fulfilling
	\[
		\alpha_i \leq \bar u_i + t_k h_i \leq \beta_i \quad \text{for all } 1 \leq i \leq k.
	\]
	Choosing now $\bm{u} := (u_i)$ by
	\[
			u_i :=
		\left\{
		\begin{aligned}
			& \bar u_i + t_k h_i  && \text{if } 1 \leq i \leq k,\\
			& \bar u_i && \text{if } i \geq k+1
		\end{aligned}
		\right.
	\]
	yields $\bm{u} \in U_{ad}$ and $\bm{u} = \bm{\bar u} + t_k\bm{h}_k$. This shows $\bm{h}_k \in \mathrm{cone}(U_{ad} - \bm{\bar u})$. Consequently, there holds $\bm{h} \in \overline{\mathrm{cone}}(U_{ad} - \bm{\bar u})$.
\end{proof}

\begin{theorem}[First-order necessary optimality conditions]
	\label{thm:1st-NOCs}
	Assume that $\bm{\bar u}$ is a local minimizer to \eqref{eq:P}. Then, there exists an adjoint state $\bar{\varphi} \in W^{1,s_*}_0(\Omega)$ for some $s_*>2$, 
	which, along with ${\bm{\bar u}}$ and $\bar y := S(\bar{\bm{u}})$, fulfills the system
	\begin{subequations}
		\label{eq:1st-NOCs}
		\begin{align}
			& \left\{
			\begin{aligned}
				-\Delta \bar y + (e^{\bar y} - 1) &= f_0(x) + \sum_{i =1}^\infty \bar u_i \delta_{x_i} && \text{in } \Omega, \\
				\bar y &= 0 && \text{on } \Gamma,
			\end{aligned}
			\right.  \label{eq:1st-NOC-state} \\
			& \left\{
			\begin{aligned}
				-\Delta \bar \varphi + e^{\bar y} \bar\varphi &= \bar y - y_d && \text{in } \Omega, \\
				\bar \varphi &= 0 && \text{on } \Gamma,
			\end{aligned}
			\right. \label{eq:1st-NOC-adjoint-state} 
			\intertext{and}
			& \left\{
			\begin{aligned}
				\bar\varphi(x_i) + \nu \bar u_i & \geq 0 && \text{if } \bar u_i = \alpha_i < \beta_i, \\
				\bar\varphi(x_i) + \nu \bar u_i & = 0 && \text{if } \bar u_i \in (\alpha_i, \beta_i), \\
				\bar\varphi(x_i) + \nu \bar u_i & \leq 0 && \text{if } \bar u_i = \beta_i > \alpha_i
			\end{aligned}
			\right.
			\quad \text{for } i \geq 1. \label{eq:1st-NOC-variational} 
		\end{align}
	\end{subequations}
\end{theorem}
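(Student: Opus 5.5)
The plan is to reduce everything to finite-dimensional directions, where \cref{prop:cost-func-1st-2nd-directional-der} gives a genuine directional derivative, and then read off the conditions componentwise.

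First, the state equation \eqref{eq:1st-NOC-state} holds by definition of $\bar y = S(\bm{\bar u})$. By \cref{prop:existence-minimizer} (more precisely \eqref{eq:expS-belong-Lr-Uad} in \cref{lem:admissible-set}), we have $e^{\bar y}\in L^{\bar r}(\Omega)$ with $\bar r>1$. The proof of \cref{prop:cost-func-1st-2nd-directional-der} then provides the adjoint state $\bar\varphi := \varphi_{\bm{\bar u}} \in W^{1,s_*}_0(\Omega)\hookrightarrow C(\overline\Omega)$, with $s_*>2$, solving \eqref{eq:adjoint-state0}. This is exactly \eqref{eq:1st-NOC-adjoint-state}, and the point values $\bar\varphi(x_i)$ make sense.

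Second, fix $k\ge 1$ and a direction $\bm h\in\ell^1_k$ satisfying \eqref{eq:tangent-direction-equivalence}. By \cref{lem:direction-approximations}, $\bm h=\bm h_k\in \mathrm{cone}(U_{ad}-\bm{\bar u})$, so there is $t_k>0$ with $\bm{\bar u}+t\bm h\in U_{ad}$ for all $t\in[0,t_k]$, by convexity. Since $\bm{\bar u}$ is a local minimizer, $J(\bm{\bar u}+\rho\bm h)-J(\bm{\bar u})\ge 0$ for all small $\rho>0$. Dividing by $\rho$ and letting $\rho\to0^+$, the directional differentiability in \cref{prop:cost-func-1st-2nd-directional-der} gives
\[
J'(\bm{\bar u})\bm h=\sum_{i=1}^k\big(\bar\varphi(x_i)+\nu\bar u_i\big)h_i\ge 0 .
\]
That proposition is stated for $k>1$, but a direction in $\ell^1_1$ also lies in $\ell^1_2$, so this causes no issue. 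As a side remark, passing to the limit via \cref{prop:generalized-deri-cost-func-limits} also yields $DJ(\bm{\bar u})\bm h\ge0$ for every $\bm h$ in the closed cone. This generalized inequality is not needed for the componentwise statement.

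Third, specialize to unit directions. For a fixed index $i$, take $k=i$ (or $k=\max\{i,2\}$) and $\bm h=\pm e_i$, where $e_i$ denotes the $i$-th unit sequence; this must be written explicitly since the paper defines no such macro. The sign admissibility from \eqref{eq:tangent-direction-equivalence} then decides which choices are allowed:
\begin{itemize}
\item If $\alpha_i<\bar u_i<\beta_i$, both signs are allowed, and we get $\bar\varphi(x_i)+\nu\bar u_i=0$.
\item If $\bar u_i=\alpha_i<\beta_i$, only $h_i=+1$ is allowed, giving $\bar\varphi(x_i)+\nu\bar u_i\ge0$.
\item If $\bar u_i=\beta_i>\alpha_i$, only $h_i=-1$ is allowed, giving $\bar\varphi(x_i)+\nu\bar u_i\le0$.
\end{itemize}
The case $\alpha_i=\beta_i$ imposes no condition and is indeed excluded from \eqref{eq:1st-NOC-variational}.

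The only potentially delicate point is ensuring that $\bm{\bar u}$ satisfies the hypothesis $e^{S(\bm{\bar u})}\in L^r(\Omega)$, so that the finite-dimensional directional derivative exists and the adjoint is continuous. This is handled by \eqref{eq:expS-belong-Lr-Uad}, which relies on Assumption \ref{ass:bounds-admissible-set}. Once that holds, the argument is routine.
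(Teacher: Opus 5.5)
Your proof is correct and is essentially the paper's argument. Both apply the finite-dimensional directional derivative of \cref{prop:cost-func-1st-2nd-directional-der} at $\bm{\bar u}$ (available thanks to \eqref{eq:expS-belong-Lr-Uad}) along admissible truncated directions, and use local optimality to get $J'(\bm{\bar u})\bm{h}\ge 0$. The only difference is that the paper tests with $(\bm{u}-\bm{\bar u})_k$ for arbitrary $\bm{u}\in U_{ad}$, lets $k\to\infty$ to obtain $\sum_{i=1}^\infty[\bar\varphi(x_i)+\nu\bar u_i](u_i-\bar u_i)\ge 0$, and then calls the componentwise conclusion ``straightforward''. You instead go directly to the directions $\pm e_i$ ($e_i$ the $i$-th unit sequence), which makes that last step explicit and shows the limit passage is not needed.
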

\begin{proof}
	Obviously, $\bm{\bar u}$ and $\bar y$ fulfill \eqref{eq:1st-NOC-state}.
	For any $\bm{u} \in U_{ad}$, we set $\bm{h} := \bm{u} - \bm{\bar u}$ and thus $\bm{h} \in U_{ad} - \bm{\bar u}$.
	Fixing now $k$ yields
	\[
		\bm{u}_\rho := \bm{\bar u} + \rho \bm{h}_k \in U_{ad} \quad \text{for all } 0 < \rho < \rho_0 
	\]
	for some constant $\rho_0$ depending only on $k$. 	From this and the optimality of $\bm{\bar u}$, there holds
	\[
		\frac{1}{\rho} \left[ J(\bm{u}_\rho) - J(\bm{\bar u})\right] \geq 0 \quad \text{for all } 0 < \rho < \rho_0.
	\]
	In view of the $k$-dimensional directional differentiability of $J$; see \cref{prop:cost-func-1st-2nd-directional-der}, there holds
	\[
		J'(\bm{\bar u}) \bm{h}_k \geq 0,
	\]
	or, equivalently, 
	\[
		\sum_{i=1}^k \bar\varphi(x_i) (u_i - \bar u_i) + \nu \sum_{i=1}^k \bar u_i (u_i - \bar u_i) \geq 0,
	\]
	where ${\bar{\varphi}}$ satisfies \eqref{eq:1st-NOC-adjoint-state} according to \cref{prop:cost-func-1st-2nd-directional-der}.
	By taking $k \to \infty$, we obtain
	\begin{equation*}
		\sum_{i=1}^\infty [\bar\varphi(x_i) + \nu \bar u_i] (u_i - \bar u_i) \geq 0 \quad \text{for all } \bm{u} = (u_i) \in U_{ad}.
	\end{equation*}
	This, together with a straightforward argument, gives \eqref{eq:1st-NOC-variational}.
\end{proof}

From now on, let $\bm{\bar u}$,  ${\bar y}$, and $\bar\varphi$ satisfy \eqref{eq:1st-NOCs}. We define  $ \bm{\bar d} = (\bar d_i) \in \ell^1$ by
\begin{equation}
	\label{eq:cost-func-deri-at-minimizer}
	\bar d_i := \bar{\varphi}(x_i) + \nu \bar u_i, \quad i \geq 1. 
\end{equation}
In view of \cref{def:generalized-deri-cost-func}, there holds
\begin{equation}
	\label{eq:J-der-at-minimizer}
	DJ(\bm{\bar u}) \bm{h} = \sum_{i =1}^\infty \bar{d}_i h_i, \quad \text{for all } \bm{h} = (h_i) \in \ell^1.
\end{equation}
As a result of the combination of \eqref{eq:1st-NOC-variational} in \cref{thm:1st-NOCs} with \eqref{eq:tangent-direction-equivalence}, one has
\begin{equation}
	\label{eq:1st-NOC-generalized}
	DJ(\bm{\bar u}) \bm{h} \geq 0 \quad \text{for all } \bm{h} \in \overline{\text{cone}}(U_{ad} - \bm{\bar u}).
\end{equation}
This result extends the classical theory of first-order optimality conditions in smooth optimization.

\medskip 
In order to establish the second-order necessary optimality conditions,
we now define the \emph{critial} cone to \eqref{eq:P} at $\bm{\bar u}$ in the usual way
\begin{equation}
	\label{eq:critical-cone}
	C_{\bm{\bar u}} := \left\{ \bm{h} = (h_i)\in \ell^1 \middle | \bm{h } \, \text{satisfies \eqref{eq:tangent-direction-equivalence} and }  \sum_{i =1}^\infty \bar{d}_i h_i = 0
	\right\}.
\end{equation}

\begin{lemma}
	\label{lem:critical-direction}
	Assume that \eqref{eq:1st-NOC-variational} holds. Let $\bm{h} \in \ell^1$ satisfy \eqref{eq:tangent-direction-equivalence}. Then, there holds
	\begin{equation}
		\label{eq:critical-direction-equiv}
		\bm{h} \in C_{\bm{\bar u}} \quad \Leftrightarrow \quad    h_i = 0 \, \text{if } \bar d_i \neq 0 \, \text{for all } i \geq 1.
	\end{equation}
\end{lemma}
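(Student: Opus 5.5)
The plan is to show that each term $\bar d_i h_i$ has a fixed sign, namely $\bar d_i h_i \geq 0$ for every $i \geq 1$, whenever $\bm{h}$ satisfies \eqref{eq:tangent-direction-equivalence} and \eqref{eq:1st-NOC-variational} holds. Once this is in place, the equivalence \eqref{eq:critical-direction-equiv} follows at once. A series of nonnegative terms vanishes if and only if every term vanishes, and $\bar d_i h_i = 0$ is equivalent to ($\bar d_i = 0$ or $h_i = 0$). This is exactly the condition ``$h_i = 0$ whenever $\bar d_i \neq 0$''. The series $\sum_i \bar d_i h_i$ converges absolutely, since $\bar\varphi \in W^{1,s_*}_0(\Omega) \hookrightarrow C(\overline{\Omega})$ gives $\sup_i |\bar\varphi(x_i)| < \infty$, $\bm{\bar u}$ is bounded, and $\bm{h} \in \ell^1$. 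Hence the sum is well defined and the termwise argument is legitimate.

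To prove the sign property, I fix $i$ and split into cases according to the position of $\bar u_i$ in $[\alpha_i,\beta_i]$, using \eqref{eq:1st-NOC-variational} and the definition \eqref{eq:cost-func-deri-at-minimizer}.
\begin{enumerate}[label=(\roman*)]
\item If $\alpha_i < \bar u_i < \beta_i$, then $\bar d_i = 0$, so the product vanishes.
\item If $\bar u_i = \alpha_i < \beta_i$, then $\bar d_i \geq 0$ by \eqref{eq:1st-NOC-variational} and $h_i \geq 0$ by \eqref{eq:tangent-direction-equivalence}, so $\bar d_i h_i \geq 0$.
\item If $\bar u_i = \beta_i > \alpha_i$, then $\bar d_i \leq 0$ and $h_i \leq 0$, so again $\bar d_i h_i \geq 0$.
\item If $\alpha_i = \beta_i = \bar u_i$, then \eqref{eq:tangent-direction-equivalence} forces both $h_i \geq 0$ and $h_i \leq 0$. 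Thus $h_i = 0$ and the product vanishes, whatever the sign of $\bar d_i$.
\end{enumerate}
In the last case the right-hand side of \eqref{eq:critical-direction-equiv} is automatically satisfied at that index, which is consistent with the claim.

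For the forward direction ($\Rightarrow$), take $\bm{h} \in C_{\bm{\bar u}}$. Then $\sum_i \bar d_i h_i = 0$ with all terms nonnegative, so $\bar d_i h_i = 0$ for every $i$, and hence $h_i = 0$ wherever $\bar d_i \neq 0$. For the reverse direction ($\Leftarrow$), every product $\bar d_i h_i$ is zero, so the sum is zero. Together with \eqref{eq:tangent-direction-equivalence}, this gives $\bm{h} \in C_{\bm{\bar u}}$ by \eqref{eq:critical-cone}.

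There is no substantial obstacle. The only delicate points are recording the degenerate case $\alpha_i = \beta_i$ separately, since \eqref{eq:1st-NOC-variational} gives no information there, and noting the absolute convergence of the series, which justifies the termwise argument.
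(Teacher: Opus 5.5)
Your proposal is correct and follows the paper's proof. Both arguments show the termwise sign property $\bar d_i h_i \geq 0$ from \eqref{eq:tangent-direction-equivalence} and \eqref{eq:1st-NOC-variational}, and then use the fact that a vanishing series of nonnegative terms has every term equal to zero. You also spell out two points the paper leaves implicit: the degenerate case $\alpha_i = \beta_i$, and the absolute convergence of $\sum_i \bar d_i h_i$ via $\bar\varphi \in C(\overline{\Omega})$.
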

\begin{proof}
	The implication ``$\Leftarrow$'' is straightforward. Conversely, assume that $\bm{h} = (h_i) \in C_{\bm{\bar u}}$. 
	By definition, there holds 
	\[
		\sum_{i=1}^\infty h_i \bar d_i =0.
	\]
	Owing to \eqref{eq:tangent-direction-equivalence}, \eqref{eq:1st-NOC-variational}, and the definition of $\bm{\bar d}$ in \eqref{eq:cost-func-deri-at-minimizer}, we have $h_i \bar d_i \geq 0$ for all $i \geq 1$. There thus holds that 
	\[
		h_i = 0 \quad \text{if} \quad \bar d_i \neq  0 \quad \text{for all } i \geq 1.
	\]
	This proves the converse implication ``$\Rightarrow$''.
\end{proof}

\begin{theorem}
	\label{thm:2ndNOCs}
	Assume that $\bm{\bar u}$ is a local minimizer to \eqref{eq:P}. Then, there exists an adjoint state $\bar\varphi$ in $W^{1,s_*}_0(\Omega)$ for some $s_*>2$, satisfying \eqref{eq:1st-NOCs} and the following second-order necessary optimality condition
	\begin{equation}
		\label{eq:2nd-NOCs}
		D^2J(\bm{\bar u})[\bm{h}, \bm{h}] \geq 0 \quad \text{for all} \quad \bm{h} \in C_{\bm{\bar u}},
	\end{equation}
	where
	\[
		D^2J(\bm{\bar u})[\bm{h}, \bm{h}]  
		= \int_{\Omega} \left[1 - e^{S(\bm{\bar u})} \bar \varphi \right] (DS(\bm{\bar u}) \bm{h})^2 dx 	 + \nu \sum_{i=1}^\infty h_i^2.
	\]
\end{theorem}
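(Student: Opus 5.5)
The plan is to reduce \eqref{eq:2nd-NOCs} to a finite-dimensional second-order condition along truncated critical directions, and then pass to the limit $k \to \infty$ using \cref{prop:generalized-deri-cost-func-limits}.

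\textbf{Step 1: truncated directions stay critical.} Existence of $\bar\varphi \in W^{1,s_*}_0(\Omega)$ satisfying \eqref{eq:1st-NOCs} is already given by \cref{thm:1st-NOCs}. By \cref{prop:existence-minimizer}, $e^{S(\bm{\bar u})} \in L^{\bar r}(\Omega)$, so \cref{prop:cost-func-1st-2nd-directional-der} applies at $\bm{\bar u}$ with $\varphi_{\bm{\bar u}} = \bar\varphi$. Fix $\bm{h} \in C_{\bm{\bar u}}$ and $k \geq 1$. The truncation $\bm{h}_k$ again satisfies \eqref{eq:tangent-direction-equivalence}. By \cref{lem:critical-direction}, $h_i = 0$ whenever $\bar d_i \neq 0$, and the same then holds for $\bm{h}_k$, so $\bm{h}_k \in C_{\bm{\bar u}}$ and
\[
J'(\bm{\bar u})\bm{h}_k = \sum_{i=1}^k \bar d_i h_i = 0 .
\]
By \cref{lem:direction-approximations}, $\bm{h}_k \in \mathrm{cone}(U_{ad} - \bm{\bar u})$. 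Because only finitely many components are perturbed, there is $t_k > 0$ with $\bm{\bar u} + \rho \bm{h}_k \in U_{ad}$ for all $0 < \rho < t_k$, as in the proof of \cref{lem:direction-approximations}. Convexity of the box in each of the first $k$ coordinates makes this hold for every $\rho$ in that interval, not just at $t_k$.

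\textbf{Step 2: second-order condition along $\bm{h}_k$.} Local optimality gives $J(\bm{\bar u} + \rho\bm{h}_k) - J(\bm{\bar u}) \geq 0$ for $\rho$ small, since $\rho\norm{\bm{h}_k}_{\ell^1} \to 0$. Subtracting $\rho J'(\bm{\bar u})\bm{h}_k = 0$ and invoking \eqref{eq:cost-func-2nd-directional-der-limit} yields
\[
0 \leq \frac{1}{\rho^2}\bigl[J(\bm{\bar u} + \rho\bm{h}_k) - J(\bm{\bar u})\bigr] = \frac12 J''(\bm{\bar u})\bm{h}_k^2 + o(1).
\]
Hence $J''(\bm{\bar u})\bm{h}_k^2 \geq 0$ for every $k$.

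\textbf{Step 3: pass to the limit.} Letting $k \to \infty$ and using \eqref{eq:2nd-deri-J-limit} gives $D^2J(\bm{\bar u})[\bm{h},\bm{h}] \geq 0$. The displayed formula in the theorem is just \eqref{eq:generalized-deri-J-2nd} with $\bm{k} = \bm{h}$.

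The only delicate point is Step 1, which ensures the truncations remain critical and feasible. Without this, the first-order term would not vanish and could not be discarded. Lemmas \ref{lem:critical-direction} and \ref{lem:direction-approximations} settle it directly; everything else follows from the previously established Taylor expansion and the convergence result.
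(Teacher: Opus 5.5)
Your proposal is correct and follows essentially the same route as the paper. In both, you reduce to the truncated directions $\bm{h}_k$, use \cref{lem:direction-approximations,lem:critical-direction} to obtain $J'(\bm{\bar u})\bm{h}_k = 0$ together with feasibility, apply \eqref{eq:cost-func-2nd-directional-der-limit} to get $J''(\bm{\bar u})\bm{h}_k^2 \geq 0$, and pass to the limit via \eqref{eq:2nd-deri-J-limit}. Your extra remarks (that $\bm{\bar u} + \rho\bm{h}_k \in U_{ad}$ for all small $\rho$ by convexity, and that $\bar\varphi = \varphi_{\bm{\bar u}}$) simply make explicit details the paper leaves implicit.
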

\begin{proof}
	On account of the limit \eqref{eq:2nd-deri-J-limit}, it suffices to prove that
	\begin{equation}
		\label{eq:2nd-NOCs-finite-dim}
		J''(\bm{\bar u})\bm{h}^2_k \geq 0 
	\end{equation}
	for all $k \geq 1$ and $\bm{h} \in C_{\bm{\bar u}}$. To this end, Let $\bm{h} \in C_{\bm{\bar u}}$ be arbitrary but fixed. 
	By \eqref{eq:J-der-at-minimizer} and \cref{lem:direction-approximations,lem:critical-direction}, there hold
	\begin{equation}
		\label{eq:hk-direction}
		J'(\bm{\bar u})\bm{h}_k =0 \quad \text{and} \quad \bm{h}_k \in \text{cone}(U_{ad} - \bm{\bar u}) \quad \text{for all } k \geq 1.
	\end{equation}
	From the identity in \eqref{eq:hk-direction} and \eqref{eq:cost-func-2nd-directional-der-limit}, we deduce for each $k\geq 1$ that
	\begin{equation*}
		J''(\bm{\bar u})\bm{h}^2_k =2 \lim\limits_{\rho \to 0^+} \frac{J(\bm{\bar u} + \rho \bm{h}_k) - J(\bm{\bar u})}{\rho^2}.
	\end{equation*}
	Combining this with the optimality of $\bm{\bar u}$ and the second relation in \eqref{eq:hk-direction} gives \eqref{eq:2nd-NOCs-finite-dim}.
\end{proof}

\section{Conclusions}
\label{sec:conclusions}
We have investigated the finite-dimensional directional differentiability of the control-to-state operator for the optimal control problem governed by  exponential semilinear elliptic equations with discrete measures as  sources. 
We  then introduced the notion of generalized differentiability, defined as the limit of the directional derivatives 
of the control-to-state mapping along finite-dimensional truncated directions. 
Building on this concept, we analyzed the first- and second-order generalized  differentiability of the objective functional
and employed these results to derive first- and second-order necessary optimality conditions for local minimizers.
The derivation of second-order sufficient optimality conditions is deferred to future work, as it requires additional technical arguments.
Finally, a practically relevant direction for further research is the establishment of error estimates for approximations of the optimal control problem, as achieved in settings where the nonlinearity in the state equation satisfies the polynomial growth assumption \cite{Otarola2024,AllendesOtarolaRankinSalgado2018}.

\appendix
\section{Some estimates for the exponential function}
\begin{lemma}
	\label{lem:exponential-esti}
	Let $a \in \R$ and $t_0 >0$ be arbitrary. Then, there hold
	\begin{align}
		& 0 \leq \frac{e^{at} - 1 - at}{t} \leq \frac{e^{at_0} - 1 - at_0}{t_0} \label{eq:expo-est-1st}
		\intertext{and}
		& \left| \frac{e^{at} - 1 - at - \frac{1}{2} a^2t^2}{\frac{1}{2} t^2 }\right| \leq \left| \frac{e^{at_0} - 1 - at_0 - \frac{1}{2} a^2t_0^2}{\frac{1}{2} t^2_0 }\right| \label{eq:expo-est-2nd}
	\end{align}
	for all $0 < t < t_0$.
\end{lemma}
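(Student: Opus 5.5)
The plan is to reduce both estimates to one-variable monotonicity statements in the variable $t>0$, with $a$ fixed. The case $a=0$ is trivial, since every expression vanishes identically, so I assume $a\neq 0$ throughout.

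For \eqref{eq:expo-est-1st}, set $\phi(t):=e^{at}-1-at$.
\begin{itemize}
\item Since $e^x\geq 1+x$ for every real $x$, we have $\phi\geq 0$, which is the left-hand inequality.
\item Moreover $\phi''(t)=a^2e^{at}\geq 0$, so $\phi$ is convex, and $\phi(0)=0$.
\item Hence $\phi(t)/t$, which is the slope of the chord of $\phi$ between $0$ and $t$, is nondecreasing in $t>0$. Taking $0<t<t_0$ gives the right-hand inequality directly.
\end{itemize}

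For \eqref{eq:expo-est-2nd}, substitute $x:=at$ and write
\[
\frac{e^{at}-1-at-\frac12a^2t^2}{\frac12t^2}=2a^2\,\frac{e^{x}-1-x-\frac12x^2}{x^2}.
\]
As $t$ increases on $(0,\infty)$, the quantity $|x|=|a|t$ increases. It therefore suffices to show that $y\mapsto |m_\pm(y)|/y^2$ is nondecreasing on $(0,\infty)$, where
\[
m_+(y):=e^{y}-1-y-\tfrac12y^2 \ \ (\text{case } a>0),\qquad m_-(y):=1-y+\tfrac12y^2-e^{-y} \ \ (\text{case } a<0,\ x=-y).
\]
In both cases Taylor's formula with integral remainder shows that $m_\pm\geq 0$ on $(0,\infty)$, so the absolute values may be dropped. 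The sign is fixed for fixed $a$: it is the sign of $a^3$.

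Monotonicity of $m/y^2$ is equivalent to $n(y):=y\,m'(y)-2m(y)\geq 0$. Since $m(0)=m'(0)=0$, we get $n(0)=0$, so it is enough to check $n'=y\,m''-m'\geq 0$ on $(0,\infty)$.
\begin{itemize}
\item For $m_+$: $n'(y)=y(e^y-1)-(e^y-1-y)=(y-1)e^y+1$. This vanishes at $0$ and has derivative $ye^y\geq 0$, so it is nonnegative.
\item For $m_-$: $n'(y)=y(1-e^{-y})-(-1+y+e^{-y})=1-(1+y)e^{-y}$, which is nonnegative because $e^{y}\geq 1+y$.
\end{itemize}
This gives \eqref{eq:expo-est-2nd} for $0<t<t_0$.

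The only step needing care is \eqref{eq:expo-est-2nd} with $a<0$. There the integral-remainder representation $a^3t\int_0^1\frac{(1-s)^2}{2}e^{sat}\,ds$ is not termwise monotone in $t$, which is why I would use the derivative test on $n$ above rather than an integral-representation argument.
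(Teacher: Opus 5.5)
Your proof is correct and is essentially the paper's argument. The paper also proves both estimates by showing that the quotients are monotone in $t$: the numerators of their derivatives are $g(t)=(at-1)e^{at}+1$, with $g(0)=0$ and $g'(t)=a^2te^{at}\geq 0$, and $h(t)=e^{at}(at-2)+2+at$ (which equals $\pm n(|a|t)$ in your notation), with $h(0)=h'(0)=0$ and $h''(t)=a^3te^{at}$, followed by the same case split on the sign of $a$. The differences are cosmetic: your rescaling $x=at$, the chord-slope phrasing of the first estimate, and your explicit sign check of $m_\pm$ in place of the paper's remark that a monotone quotient tending to $0$ at $0^+$ has nondecreasing modulus. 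The factor $\frac12$ in the remainder formula of your closing remark should be dropped, but that remark is not used.
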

\begin{proof}
	Setting the function $\psi(t) := \frac{e^{at} - 1 - at}{t}$, $t \in (0,t_0)$, 
	yields
	\[
		\psi'(t) = \frac{(at-1)e^{at}+1}{t^2} =: \frac{g(t)}{t^2}.
	\]
	Moreover, by a simple computation, we have $g'(t) = a^2te^{at} \geq 0$ for all $0 < t <t_0$ and $g(0) =0$. We thus obtain $\psi'(t) \geq 0$ for all $0 < t <t_0$. Hence, the function $\psi$
	is nondecreasing and continuous on $(0,t_0)$. From this, we obtain \eqref{eq:expo-est-1st}.
	For the estimate \eqref{eq:expo-est-2nd}, we first set
	\[
		\phi(t) := \frac{e^{at} - 1 - at - \frac{1}{2} a^2t^2}{\frac{1}{2} t^2 }, \quad 0 < t < t_0.
	\]
	Since $\phi$ is continuous on $(0,t_0)$ and $\phi(t) \to 0$ as $t \to 0^{+}$, it suffices to show that $\phi$ is monotone on $(0,t_0)$. To this end, by a straightforward computation, we get
	\[
		\phi'(t) = \frac{e^{at}(at-2) +2  + at }{\frac{1}{2} t^3 } =: \frac{h(t)}{\frac{1}{2} t^3}.
	\]
	By a simple computation, one has
	\[
		 h'(t) = ae^{at}(at-1) + a \quad \text{and} \quad h''(t) = a^3 e^{at}t.
	\]
	We now consider two cases:
	
	\emph{Case I: $a \geq 0$.} In this case, there holds $h''(t) \geq 0$ for all $t \in (0,t_0)$. Consequently, we have $h'(t) \geq h'(0)=0$ for all $t \in (0,t_0)$. Thus, $h$ is nondecreasing on $(0,t_0)$ and $h(t) \geq h(0) =0$ for all $t \in (0,t_0)$. Therefore, $\phi$ is nondecreasing on $(0,t_0)$.
	
	\emph{Case II: $a < 0$.} Similarly, $\phi$ is nonincreasing on $(0,t_0)$.
\end{proof}

\section{Exponential estimates for Poisson's equation with regularized point sources}
\begin{lemma}
	\label{lem:Poisson-regularized-point-source}
	Let $x_0 \in \R^2$ and $0 < \rho_0 <R$ be such that $B_{\R^2}(x_0, \rho_0) \subset B_{\R^2}(0,R) =: B_R$.
	Assume that $y_0$ is the unique solution to
	\begin{equation}
		\label{eq:Poisson-reg-point-source}
		\left\{
		\begin{aligned}
			-\Delta  y_0 & = \phi_\epsilon(\cdot -x_0) && \text{in } B_R, \\
			 y_0 &= 0 && \text{on } \partial B_R,
		\end{aligned}
		\right.
	\end{equation} 
	where $\phi_\epsilon$, $\epsilon \in (0, \rho_0)$, is a mollifier satisfying
	\[
	0 \leq \phi_\epsilon \in C^\infty_c(\R^2), \quad \supp \phi_\epsilon \subset \overline{B_{\R^2}(0, \epsilon)}, \quad \int_{\R^2} \phi_\epsilon dx =1.
	\]
	Then there hold for all $m \in (0, 4\pi)$ that
	\begin{equation}
		\label{eq:y0-esti-pointwise-outside}
		\exp[m {y}_0(x)] \leq  \left( \frac{2R}{\rho_0 -\epsilon} \right)^{ \frac{m}{2 \pi}}
	\end{equation}
	for a.e. $x \in B_R \backslash \overline{  B_{\R^2}(x_0, \rho_0)}$
	and
	\begin{equation}
		\label{eq:y0-esti-inball}
		\int_{ B_{\R^2}(x_0, \rho_0)} \exp[m{y}_0(x)]  dx 
		\leq \frac{2 \pi (\epsilon + \rho_0)^2}{2 - \frac{m}{2 \pi}}  \left( \frac{2R}{\epsilon + \rho_0} \right)^{\frac{m}{2 \pi} }.
	\end{equation}
\end{lemma}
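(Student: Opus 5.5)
The plan is to compare $y_0$ with the Green function of the ball $B_R$. Since $B_{\R^2}(x_0,\rho_0)\subset B_R$, we have $|x_0|<R$ and $|x-\xi|\le 2R$ for all $x,\xi\in B_R$. Write $G(x,\xi)$ for the Green function of $-\Delta$ on $B_R$ with zero Dirichlet data. Then
\[
y_0(x)=\int_{B_R} G(x,\xi)\,\phi_\epsilon(\xi-x_0)\,d\xi .
\]
We have $G(x,\xi)=\frac{1}{2\pi}\ln\frac{1}{|x-\xi|}-H(x,\xi)$, where $H$ is the harmonic correction. By the maximum principle, $H(x,\cdot)$ is bounded below on $\overline{B_R}$ by its boundary values $\frac{1}{2\pi}\ln\frac{1}{|x-\xi|}\ge\frac{1}{2\pi}\ln\frac{1}{2R}$. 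This yields the key pointwise bound
\[
0\le G(x,\xi)\le \frac{1}{2\pi}\ln\frac{2R}{|x-\xi|}\qquad (x,\xi\in B_R,\ x\neq \xi).
\]
Alternatively, one can compare $y_0$ directly with $\frac{1}{2\pi}\ln\frac{2R}{|\cdot|}*\phi_\epsilon(\cdot-x_0)$, which is superharmonic-equal and nonnegative on $\partial B_R$. Together with $\phi_\epsilon\ge0$ and $\int\phi_\epsilon=1$, this gives
\[
y_0(x)\le \int \frac{1}{2\pi}\ln\frac{2R}{|x-\xi|}\,\phi_\epsilon(\xi-x_0)\,d\xi .
\]

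For the pointwise estimate outside the ball, take $x\notin \overline{B_{\R^2}(x_0,\rho_0)}$ and $\xi\in \supp\phi_\epsilon(\cdot-x_0)\subset \overline{B_{\R^2}(x_0,\epsilon)}$. Then $|x-\xi|\ge \rho_0-\epsilon>0$, so $y_0(x)\le\frac{1}{2\pi}\ln\frac{2R}{\rho_0-\epsilon}$. Multiplying by $m$ and exponentiating gives \eqref{eq:y0-esti-pointwise-outside}.

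For the integral estimate inside the ball, apply Jensen's inequality to the probability measure $\phi_\epsilon(\xi-x_0)\,d\xi$ and the convex function $\exp$:
\[
e^{m y_0(x)}\le \int \left(\frac{2R}{|x-\xi|}\right)^{\frac{m}{2\pi}}\phi_\epsilon(\xi-x_0)\,d\xi .
\]
Integrate over $x\in B_{\R^2}(x_0,\rho_0)$ and use Fubini. For each $\xi$ with $|\xi-x_0|\le\epsilon$ we have $B_{\R^2}(x_0,\rho_0)\subset B_{\R^2}(\xi,\rho_0+\epsilon)$. Polar coordinates around $\xi$ give
\[
\int_{B(\xi,\rho_0+\epsilon)}\left(\frac{2R}{|x-\xi|}\right)^{\frac{m}{2\pi}}dx
=(2R)^{\frac{m}{2\pi}}\,\frac{2\pi(\rho_0+\epsilon)^{2-\frac{m}{2\pi}}}{2-\frac{m}{2\pi}} ,
\]
which is finite because $\frac{m}{2\pi}<2$ for $m<4\pi$. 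This is exactly the right-hand side of \eqref{eq:y0-esti-inball}, and integrating against $\phi_\epsilon$ (total mass one) finishes the proof.

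The main obstacle is justifying the Green function upper bound $G\le\frac{1}{2\pi}\ln\frac{2R}{|x-\xi|}$, and hence $y_0\le \frac{1}{2\pi}\ln\frac{2R}{|\cdot|}*\phi_\epsilon(\cdot-x_0)$. I would handle it by the comparison (maximum) principle for smooth functions on $B_R$. The difference $w:=\frac{1}{2\pi}\ln\frac{2R}{|\cdot|}*\phi_\epsilon(\cdot-x_0)-y_0$ is harmonic in $B_R$, since both terms have Laplacian $-\phi_\epsilon(\cdot-x_0)$. It is also nonnegative on $\partial B_R$, because $|x-\xi|\le 2R$ there. Hence $w\ge0$ in $B_R$. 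Nonnegativity of $y_0$ is not needed for these upper bounds, since $m>0$.
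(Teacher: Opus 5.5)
Your proposal is correct and follows essentially the same route as the paper: you bound $y_0(x)\le\frac{1}{2\pi}\int\ln\frac{2R}{|x-\xi|}\,\phi_\epsilon(\xi-x_0)\,d\xi$ via the Green representation, get the pointwise estimate from $|x-\xi|\ge\rho_0-\epsilon$, and obtain the integral estimate from Jensen's inequality followed by Fubini and polar coordinates over a ball of radius $\rho_0+\epsilon$. The only difference is that you prove the logarithmic Green-function bound yourself via the maximum principle, whereas the paper imports it from a lemma in the authors' earlier work on the finitely-many-Dirac case.
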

\begin{proof}
	Let $G$ denote the Green function of the Laplace operator $-\Delta$ in the unit ball in $\R^2$. Green's representation formula; see, e.g. \cite[Cor.~2.2]{VeronVivier2000}, \cite[Thm.~12, Chap.~2]{Evans2010}, and \cite{ChipotQuittner2004}, yields
	\begin{equation*}
		{y}_0(x) = \int_{B_R} G\left(\frac{x}{R}, \frac{x'}{R}\right)\phi_\epsilon(x' - x_0)dx' =  \int_{B_{\R^2}(x_0, \epsilon)} G\left(\frac{x}{R}, \frac{x'}{R}\right)\phi_\epsilon(x' - x_0)dx' 
	\end{equation*}
	for a.e. $x \in B_R$. 	
	By  \cite[Lem.~A.1]{Nhu2025}, it follows that
	\begin{equation}
		\label{eq:y0-esti-pointwise}
		0 \leq {y}_0(x) \leq \frac{1}{2 \pi} \int_{B_{\R^2}(x_0, \epsilon)} \ln \left( \frac{2R}{|x-x'|} \right)  \phi_\epsilon(x' - x_0)dx' 
	\end{equation}
	for a.e. $x \in B_R.$
	
	We now consider the following cases:
	
	\noindent$\bullet$
	For a.e.  $x \in B_R \backslash \overline{ B_{\R^2}(x_0, \rho_0)}$, there holds $|x - x'| \geq \rho_0-\epsilon $ for all $x' \in B_{\R^2}(x_0, \epsilon)$. We then have from \eqref{eq:y0-esti-pointwise} that
	\[
	0 \leq {y}_0(x) \leq \frac{1}{2 \pi} \int_{B_{\R^2}(x_0, \epsilon)} \ln \left(\frac{2R}{\rho_0 -\epsilon}\right) \phi_\epsilon(x' - x_0)dx' = \frac{1}{2 \pi}\ln \frac{2R}{\rho_0 -\epsilon}.
	\]
	This gives	 \eqref{eq:y0-esti-pointwise-outside}.
	
	\noindent$\bullet$
	For a.e. $x \in \overline{  B_{\R^2}(x_0, \rho_0)}$, thanks to \eqref{eq:y0-esti-pointwise} and the fact that $\supp \phi_\epsilon \subset \overline{B_{\R^2}(0, \epsilon)}$, we obtain
	\begin{equation*}
		\exp[m{y}_0(x)]
		\leq \exp \left[  \frac{m}{2 \pi} \int_{B_{\R^2}(x_0, \epsilon)} \ln \left( \frac{2R}{|x-x'|} \right)  \phi_\epsilon(x' - x_0)dx' \right].
	\end{equation*}
	Jensen's inequality for integrals; see, .e.g. \cite[Thm.~B.50]{Leoni2017}, and the identity
	\[
		\int_{B_{\R^2}(x_0, \epsilon)}\phi_\epsilon(x' - x_0)dx' =1
	\] 
	then imply that
	\begin{multline*}
		\exp[m{y}_0(x)] \leq 
		\int_{B_{\R^2}(x_0, \epsilon)} \exp \left [  \frac{m}{2 \pi} \ln \left( \frac{2R}{|x-x'|} \right) \right]   \phi_\epsilon(x' - x_0) dx'\\
		\begin{aligned}
			& = 
			\int_{B_{\R^2}(x_0, \epsilon)} \left ( \frac{2R}{|x-x'|} \right )^{\frac{m}{2 \pi }}   \phi_\epsilon(x' - x_0) dx'.
		\end{aligned}
	\end{multline*}
	By integrating over ${ B_{\R^2}(x_0, \rho_0)}$, we then have
	\begin{equation*}
		\int_{ B_{\R^2}(x_0, \rho_0)} \exp[m {y}_0(x)]  dx 	\leq \int_{ B_{\R^2}(x_0, \rho_0)} dx \int_{B_{\R^2}(x_0, \epsilon)} \left ( \frac{2R}{|x-x'|} \right )^{\frac{m}{2 \pi }}   \phi_\epsilon(x' - x_0) dx'.
	\end{equation*}
	From this and a method of  integration by substitution, we deduce 
	\begin{multline*}
		\int_{ B_{\R^2}(x_0, \rho_0)} \exp[m {y}_0(x)] dx  \leq \int_{ B_{\R^2}(0,  \rho_0 + \epsilon)} d \eta \int_{B_{\R^2}(0, \epsilon)} \left( \frac{2R}{|\eta|} \right)^{\frac{m}{2 \pi }}  \phi_\epsilon(\psi) d\psi \\\\
		\begin{aligned}	
			& =  \int_{ B_{\R^2}(0,  \rho_0 + \epsilon)} \left( \frac{2R}{|\eta|} \right)^{\frac{m}{2 \pi }} d \eta 
			= 2 \pi (2R)^{\frac{m}{2 \pi }} \times  \frac{1}{2 - {\frac{m}{2 \pi }} }\left(\rho_0 + \epsilon\right)^{2 -{\frac{m}{2 \pi }} }.
		\end{aligned}
	\end{multline*}
	This shows \eqref{eq:y0-esti-inball}.
\end{proof}


\bibliographystyle{unsrt}  

\bibliography{exponentialmeasure_nfao}

\section*{Statements and Declarations}

\subsection*{Funding}
{%
	This research is funded by Phenikaa University under grant number PU2025-4-A-02.}

\subsection*{Disclosure Statement}
{%
	The authors report there are no competing interests to declare.
}%

\subsection*{Author Contributions}
{%
	All authors contributed to the study conception and design. The first draft of the manuscript was written by the first author and all authors commented on previous versions of the manuscript. All authors read and approved the final manuscript.
}

\subsection*{Data Availability}
{%
	All data generated or analysed during this study are included in this published article [and its supplementary information files].
}

\end{document}